\documentclass[12pt]{amsart}

\baselineskip 6mm

\DeclareMathAlphabet{\mathscr}{OT1}{pzc}{m}{it}

\usepackage{enumitem} 
\usepackage{braket}
\usepackage[margin=1.2in]{geometry}
\usepackage{hyperref}

\setlist{itemsep=3pt}

\newtheorem{prop}{Proposition}
\newtheorem{theo}[prop]{Theorem}
\newtheorem{lemm}[prop]{Lemma}

\newtheorem{claim}{Claim}

\newtheorem*{theo-lowdim}{Theorem \ref{theo:weakly-st-low-dim-est}$^{\prime}$}
\newtheorem*{theo-highdim}{Theorem \ref{theo:main-weakly-st-sheeting}$^{\prime}$}

\theoremstyle{definition}
\newtheorem{rema}{Remark}

\newtheorem*{theorem*}{Theorem}


\newcommand{\NN}{\mathbb{N}}

\newcommand{\RR}{\mathbb{R}}
\renewcommand{\SS}{\mathbb{S}}

\newcommand{\cC}{\mathcal C}
\newcommand{\cH}{\mathcal H}


\DeclareMathOperator{\supp}{spt}

\DeclareMathOperator{\Ric}{Ric}
\DeclareMathOperator{\sing}{sing}
\DeclareMathOperator{\Index}{index}

\title[CMC Curvature estimates] {Curvature estimates and sheeting theorems for weakly stable CMC hypersurfaces}

\author{Costante Bellettini} 
\address{Department of Mathematics\\
University College London\\
London WC1E 6BT, United Kingdom}
\email{c.bellettini@ucl.ac.uk}
\author{Otis Chodosh}
\address{Department of Mathematics\\Princeton
University\\Princeton, NJ 08544}
\address{School of Mathematics\\Institute for Advanced Study\\Princeton, NJ 08540}
\email{ochodosh@math.princeton.edu}
\author{Neshan Wickramasekera}
\address{DPMMS \\
University of Cambridge \\
Cambridge CB3 0WB, United Kingdom}
\email{ngw24@dpmms.cam.ac.uk}

\begin{document}

\begin{abstract}
Weakly stable constant mean curvature (CMC) hypersurfaces are stable critical points of the area functional with respect to volume preserving deformations. We establish a pointwise curvature estimate (in the non-singular dimensions) and a sheeting theorem (in all dimensions) for weakly stable CMC hypersurfaces, giving an effective version of the compactness theorem for weakly stable CMC hypersurfaces established in the recent work of the first- and third-named authors \cite{BW:stableCMC}. Our results generalize the curvature estimate and the sheeting theorem proven respectively by Schoen--Simon--Yau and Schoen--Simon for strongly stable hypersurfaces. 
\end{abstract}

\maketitle

\section{Introduction} 

In the recent work \cite{BW:stableCMC}, a regularity and compactness theory has been developed (in a varifold setting) for weakly stable constant-mean-curvature (CMC) hypersurfaces. The question of whether there is an effective version of the compactness theorem of \cite{BW:stableCMC}, i.e.\ whether \emph{weakly} stable CMC hypersurfaces must satisfy a uniform local curvature estimate under appropriate hypotheses, arises naturally from that work. Here we settle this  question by proving, for such hypersurfaces satisfying uniform mass and mean curvature bounds, a pointwise curvature estimate in the non-singular dimensions (i.e.\ in dimensions $\leq 6$) and a sheeting theorem (i.e.\ a pointwise curvature estimate subject to the additional hypothesis that the hypersurface is weakly close to a hyperplane) in all dimensions.  Our results generalize the foundational works of Schoen--Simon--Yau \cite{SSY} that established a pointwise curvature estimate for \emph{strongly} stable minimal hypersurfaces in low dimensions and of Schoen--Simon \cite{SchoenSimon} that established a sheeting theorem in all dimensions for a class of strongly stable hypersurfaces (including CMC hypersurfaces) subject to an a priori smallness hypothesis  on the singular set.

Recall that a smooth immersion $x \, : \, \Sigma \to {\mathbb R}^{n+1}$  has constant mean curvature if and only if every compact portion
$\Sigma_{1} \subset \Sigma$ is stationary with respect to the hypersurface area functional $\mathscr{a}(\Sigma_{1})$ for volume-preserving deformations. This condition is equivalent to the fact that for some constant $H$, every compact portion  $\Sigma_{1} \subset \Sigma$ is stationary with respect to the functional 
$$J (\Sigma_{1}) = \mathscr{a}(\Sigma_{1}) + H \mathscr{vol}(\Sigma_{1})$$ 
for arbitrary deformations, where 
$\mathscr{vol}(\Sigma_{1})$ is the enclosed volume functional (which can be expressed as $\mathscr{vol}(\Sigma_{1}) = \frac{1}{n+1}\int_{\Sigma_{1}} x \cdot \nu \, d\Sigma$ where $\nu$ is a continuous unit normal to $\Sigma$ and $d\Sigma$ is the volume element with respect to the metric induced by the immersion $x$); in this case, $H$ is the value of the scalar mean curvature of $\Sigma$ with respect to $\nu$. If $\Sigma$ has constant mean curvature, then for any given $\phi \in C^{\infty}_{c}(\Sigma)$ and relative to any smooth 1-parameter family of deformations of $\Sigma$ with initial velocity $\phi \nu$, the second variation of $\Sigma$ with respect to $J$ is given by the quadratic form $$\delta^{2}J(\phi,\phi) = \int_{\Sigma} |\nabla \phi|^{2} - |A_{\Sigma}|^{2} \phi^{2},$$ where  $A_{\Sigma}$ is the second fundamental form of $\Sigma$ and $\nabla$ is the gradient on $\Sigma$ (cf.\ \cite[Proposition 2.5]{BdoCE}).  
We say that the CMC hypersurface $\Sigma$ is weakly stable  if every compact portion $\Sigma_{1} \subset \Sigma$ is stable, i.e.\ has non-negative second variation, with respect to the area functional, or equivalently, with respect to $J$, for volume-preserving deformations. Weakly stable CMC hypersurfaces arise as stable critical points for the isoperimetric problem. The weak stability of $\Sigma$ is equivalent to the validity of the stability inequality 
\[
\int_\Sigma |A_{\Sigma}|^2 \phi^{2} \leq  \int_{\Sigma} |\nabla \phi|^2
\]
for any $\phi \in C^\infty_c(\Sigma)$ with $\int_\Sigma \phi =0$ (cf.\ \cite[Proposition 2.7]{BdoCE}), while strong stability of $\Sigma$ requires that this inequality holds for arbitrary $\phi \in C^{\infty}_{c}(\Sigma)$. 

The methods used in \cite{SSY,SchoenSimon} for strongly stable hypersurfaces involve the use of \emph{positive} test functions $\phi$ in the stability inequality, and since these never integrate to zero, it is not clear how to directly apply these methods in the setting of weak stability.
The strategy employed here is different: we take a geometric approach, combining the results of \cite{SSY,SchoenSimon} for strongly stable hypersurfaces with the fact that complete weakly stable minimal hypersurfaces have only one end, a result established by Cheng--Cheung--Zhou (\cite{ChengCheungZhou}) and generalized here (in a fairly straightforward manner) to allow the hypersurfaces to have a small singular set. This generalization is necessary for the sheeting theorem. A key difficulty in the proof of the sheeting theorem is to correctly ``localize'' the one-end result in order to transfer the ``flatness'' from large to small scales (see Remark \ref{rema:transfer}). This is handled by a careful blow-up procedure relying on the aforementioned regularity and compactness theorems in \cite{BW:stableCMC} for weakly stable CMC hypersurfaces and a rigidity theorem (Lemma~\ref{lemm:cone-small-curvature} below), due to Simons (\cite{Simons}), for minimal hypersurfaces of spheres.

Our main results are Theorem~\ref{theo:weakly-st-low-dim-est}, Theorem~\ref{theo:main-weakly-st-sheeting}, Theorem~\hyperref[theo:varifolds-low-dim-est]{$1^{\prime}$} and Theorem~\hyperref[theo:varifolds-main-weakly-st-sheeting]{$2^{\prime}$} below. Theorem~\ref{theo:weakly-st-low-dim-est} gives a pointwise curvature bound valid for mass bounded weakly stable CMC hypersurface of dimension $n$ with $3 \leq n \leq 6$ (that are assumed, in case  $3 \leq n \leq 5$, to be immersed,  or in case $n=6$, immersed without transverse intersections or immersed with a specific mass bound); Theorem \ref{theo:main-weakly-st-sheeting} establishes a sheeting result that holds in arbitrary dimensions for weakly stable CMC hypersurfaces satisfying an arbitrary uniform mass bound and  allowed, a priori, to contain a small set of ``genuine'' singularities away from which the hypersurfaces are assumed smoothly immersed without transverse intersections. By virtue of the regularity theory of \cite{BW:stableCMC}, the hypotheses of absence or smallness of the set of genuine singularities in Theorem 1 and Theorem 2 respectively can immediately be replaced by considerably weaker structural conditions. These stronger results, which hold in a varifold setting, are given as 
Theorem~\hyperref[theo:varifolds-low-dim-est]{$1^{\prime}$} and Theorem~\hyperref[theo:varifolds-main-weakly-st-sheeting]{$2^{\prime}$}.

It is interesting to note the following: Consider a CMC hypersurface $\Sigma$ immersed in $\RR^{n+1}$ with mean curvature $H$ (possibly equal to zero).  Recall that the \emph{Morse index} of $\Sigma$ is defined by setting $\Index(\Sigma)$ to be the maximum dimension of a linear subspace $W$ of $C^{\infty}_{c}(\Sigma)$ so that for any $\phi \in W\setminus\{0\}$, the second variation $\delta^{2}J(\phi, \phi) <0$, or equivalently, 
\[
\int_{\Sigma} |A_{\Sigma}|^{2}\phi^{2} > \int_{\Sigma} |\nabla \phi|^{2}.
\]
It is easy to see that if $\Sigma$ is weakly stable, then $\Index(\Sigma) \leq 1$. On the other hand, Theorems \ref{theo:weakly-st-low-dim-est} and \ref{theo:main-weakly-st-sheeting} below are \emph{false} if we replace ``$\Sigma$ is weakly stable'' with ``$\Sigma$ satisfies $\Index(\Sigma) \leq 1$.'' This can be seen by considering rescalings of the higher-dimensional catenoid (the unique non-flat rotationally symmetric minimal hypersurface in $\RR^{n+1}$) which converge weakly to a hyperplane with multiplicity two, but do not have bounded curvature (or satisfy the conclusion of the sheeting theorem). In the context of the results below, the crucial difference between ``weakly stable'' and ``$\Index(\Sigma)\leq 1$'' is that weakly stable surfaces cannot have two ends (cf.\ Appendix \ref{appendixCCZ}) while index one surfaces can (e.g., the catenoid).

\subsection{Results for hypersurfaces with small singular set}\label{small-sing} 
In the non-singular dimensions (i.e.~in dimensions $\leq 6$), we have the following curvature estimates. 
\begin{theo}\label{theo:weakly-st-low-dim-est}
For each $H_0 >0$ and $\Lambda \geq 1$, there exists $C = C(H_{0}, \Lambda)$ such that the following holds: Let $3 \leq n \leq 6$ and let $\Sigma \subset B_{R}(0)\subset \RR^{n+1}$ be a smooth immersed hypersurface with $(\overline{\Sigma} \setminus \Sigma) \cap B_{R}(0) = \emptyset$, $\cH^{n}(\Sigma) \leq \Lambda R^{n}$ and with constant scalar mean curvature $H$ such that $|H| \leq H_{0}R^{-1}.$  Assume that $\Sigma$ is weakly stable as a CMC immersion. For $n=6$ suppose additionally  either that $\Sigma$ contains no point where $\Sigma$ intersects itself transversely (or equivalently, by the maximum principle, for each point $p \in \Sigma$ where $\Sigma$ is not embedded, there is $\rho >0$ such that $\Sigma \cap B^{n+1}_\rho(p)$ is the union of two embedded smooth CMC hypersurfaces intersecting only tangentially), or that $\Lambda = 3-\delta$ for some $\delta \in (0, 1)$. 

Then 
\[
\sup_{x \in \Sigma \cap B_{R/2}(0)} |A_{\Sigma}|(x) \leq C R^{-1},
\]
where $A_{\Sigma}$ denotes the second fundamental form of $\Sigma$.
\end{theo}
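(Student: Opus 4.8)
The plan is to argue by contradiction using a blow-up argument, exploiting the ``one end'' property of complete weakly stable minimal hypersurfaces (established by Cheng--Cheung--Zhou and generalized here), in combination with the Schoen--Simon--Yau curvature estimate for strongly stable minimal hypersurfaces.

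\medskip

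\emph{Setup of the contradiction argument.}
First I would assume the estimate fails: there is a sequence of smooth immersed weakly stable CMC hypersurfaces $\Sigma_j \subset B_{R_j}(0)$ with the stated mass bound $\cH^n(\Sigma_j) \le \Lambda R_j^n$ and mean curvature $|H_j| \le H_0 R_j^{-1}$, together with points $x_j \in \Sigma_j \cap B_{R_j/2}(0)$ with $|A_{\Sigma_j}|(x_j) \, R_j \to \infty$. After translating and rescaling so that $R_j = 1$, one performs the standard point-selection (Schoen-style) argument: choose a point $y_j \in \Sigma_j \cap \overline{B_{3/4}(0)}$ maximizing $|A_{\Sigma_j}|(y) \, \operatorname{dist}(y, \partial B_{3/4}(0))$, set $\lambda_j = |A_{\Sigma_j}|(y_j) \to \infty$, and rescale the picture around $y_j$ by the factor $\lambda_j$. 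The rescaled surfaces $\widetilde\Sigma_j$ then have $|A_{\widetilde\Sigma_j}| \le 1 + o(1)$ on larger and larger balls, $|A_{\widetilde\Sigma_j}|(0) = 1$, and mean curvature $\lambda_j^{-1}|H_j| \to 0$. By the mass bound and standard compactness (using the regularity/compactness theory of \cite{BW:stableCMC}, or just the classical minimal-surface compactness since curvatures are now bounded), a subsequence converges smoothly and with finite multiplicity to a complete immersed minimal hypersurface $\Sigma_\infty \subset \RR^{n+1}$ with $|A_{\Sigma_\infty}| \le 1$, $|A_{\Sigma_\infty}|(0) = 1$ (hence $\Sigma_\infty$ is non-flat), and with Euclidean area growth $\cH^n(\Sigma_\infty \cap B_\rho) \le \Lambda \rho^n$.

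\medskip

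\emph{Transfer of weak stability, the one-end property, and conclusion.}
The next step is to show that $\Sigma_\infty$ is strongly stable as a minimal hypersurface — this is where the novelty lies, and where the ``weakly stable cannot have two ends'' input enters. Weak stability of $\Sigma_j$ passes to the limit as weak stability of $\Sigma_\infty$ (the stability inequality with mean-zero test functions is closed under smooth convergence, and scaling only helps since $|\nabla\phi|^2$ scales like $\phi^2$ relative to $|A|^2\phi^2$). Then I invoke the one-end result: a complete, connected, weakly stable minimal hypersurface in $\RR^{n+1}$ (with the relevant structural hypotheses, e.g.\ the small-singular-set version in Appendix~\ref{appendixCCZ}) has exactly one end. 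But a complete non-flat minimal hypersurface with Euclidean volume growth and only one end must in fact be \emph{strongly} stable: on a one-ended manifold one can approximate any compactly supported $\phi$ by a mean-zero function at negligible cost in the Dirichlet energy — concretely, subtract a small correction supported on a faraway annulus where the volume growth forces the correction's gradient energy to $0$ (this is the standard logarithmic cutoff trick, available precisely because there is only one end into which to ``push'' the mass). Hence $\Sigma_\infty$ satisfies the full (all test functions) stability inequality, so by the Schoen--Simon--Yau estimate \cite{SSY} (valid since $3 \le n \le 6$ and $\Sigma_\infty$ has Euclidean volume growth) $\Sigma_\infty$ must be a hyperplane, contradicting $|A_{\Sigma_\infty}|(0) = 1$.

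\medskip

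\emph{Main obstacles and technical points.}
I expect the principal difficulty to be the rigorous passage from ``one end'' to ``strongly stable'', i.e.\ constructing the mean-zero corrections uniformly without destroying the stability inequality, and ensuring the argument survives the presence of self-intersections in the limit (handled, in the $n=6$ case, by the hypothesis ruling out transverse self-intersections or by the sharp mass bound $\Lambda = 3-\delta$, which via a monotonicity/density argument forces the limit to be embedded or of low multiplicity). A second delicate point is the compactness step: one must make sure the rescaled limit is connected (point-selection around the curvature concentration point gives a nontrivial component through the origin, and one restricts attention to that component) and that the multiplicity is finite with the area bound passing to the limit; here the structural hypotheses and the theory of \cite{BW:stableCMC} are what guarantee $\Sigma_\infty$ is a genuine smooth immersed (outside a small singular set, then actually smooth by dimension) minimal hypersurface rather than a varifold with bad singularities. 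Finally, one checks the self-improvement: the contradiction shows $\sup_{\Sigma \cap B_{R/2}} |A_\Sigma| \, R$ is bounded by a constant depending only on $H_0$ and $\Lambda$, which is exactly the claimed $C = C(H_0, \Lambda)$.
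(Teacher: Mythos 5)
Your blow-up skeleton is exactly the paper's (Section \ref{proofof1}): point-picking, rescaling, passage to a complete weakly stable minimal limit $\tilde\Sigma_\infty$ with $|A|(0)=1$ and Euclidean area growth, and a contradiction with a Bernstein-type statement. Where you genuinely diverge is in how you prove that Bernstein statement. The paper's Proposition \ref{prop:bernstein-low-dim} only upgrades weak to strong stability \emph{outside a compact set} (via the disjoint-supports trick), then uses \cite{SSY,SchoenSimon} to get curvature decay at infinity, classifies tangent cones at infinity as multiplicity-$m$ planes via Simons, invokes the one-end theorem of Cheng--Cheung--Zhou to force $m=1$ through a connectedness/covering-space argument on $\Sigma\cap\partial B_r$, and concludes by monotonicity. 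You instead upgrade weak to strong stability \emph{globally} on the complete limit: given $\phi\in C^\infty_c$ with $\int\phi=c\neq 0$, put a correction $\psi_\rho=-c\,\chi_\rho/\int\chi_\rho$ on a far-away annulus; monotonicity gives $\int\chi_\rho\gtrsim\rho^n$ while the mass bound gives $\int|\nabla\chi_\rho|^2\lesssim\rho^{n-2}$, so $\int|\nabla\psi_\rho|^2\lesssim\rho^{-n-2}\to0$, and weak stability applied to $\phi+\psi_\rho$ (disjoint supports) yields $\int|A|^2\phi^2\leq\int|\nabla\phi|^2+o(1)$. This is correct, and it then lets you quote \cite{SSY} (for $n\leq5$; \cite{SchoenSimon} or \cite{Wic08} for $n=6$ under the stated extra hypotheses) directly to conclude flatness. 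What each approach buys: yours is shorter and avoids the tangent-cone-at-infinity analysis entirely, but it is special to the nonsingular dimensions, since in dimension $\geq7$ strong stability cannot imply flatness (Simons cone); the paper's geometric route through the one-end theorem and the multiplicity of the cone at infinity is precisely what survives for Proposition \ref{prop:bernstein-all-dim} and Theorem \ref{theo:main-weakly-st-sheeting}.

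Two small corrections to your reasoning. First, your correction construction does not actually use the one-end property at all: it needs only that the (connected component through the origin of the) limit is noncompact and properly immersed with the two-sided Euclidean volume bounds, so that far annuli are nonempty and carry volume $\gtrsim\rho^n$; your remark that the trick is ``available precisely because there is only one end'' is a misattribution (with two ends you could push the mass into either one), and the sentence ``a complete non-flat minimal hypersurface with Euclidean volume growth and only one end must be strongly stable'' is false as literally stated --- weak stability is the essential hypothesis. Invoking Theorem \ref{theo:CCZ-smooth} is therefore harmless but superfluous in your route. Second, you should be explicit that weak stability passes to the blow-up limit even when the convergence has multiplicity $m\geq2$ (transplanted mean-zero test functions have only approximately zero mean on $\tilde\Sigma_j$ and need a small fix); the paper asserts the analogous fact, and it is routine, but it is the one place where the limiting procedure interacts with the mean-zero constraint.
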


We note that when $n=2$ (cf.\ \cite{Ye,EichmairMetzger}) stronger estimates are available---i.e., without the bounded area assumption---as consequences of the strong Bernstein type theorems available \cite{Barbosa-doCarmo,BdoCE,Palmer,DaSilveira,LopezRos}. As such, we will not consider this case here. 

\begin{rema}
In case $n=6$, the reason for the additional restrictions in Theorem~\ref{theo:weakly-st-low-dim-est} (that either $\Sigma$ has no transverse points or $\Lambda = 3-\delta$) is that it is not known if a pointwise curvature estimate holds for 6-dimensional immersed strongly stable minimal hypersurfaces satisfying an arbitrary mass bound; such an estimate is only known to hold if the minimal hypersurface is either embedded (\cite{SchoenSimon}) or is immersed and satisfies a  
mass bound corresponding to $\Lambda = 3 - \delta$ for some $\delta \in (0, 1)$ (\cite{Wic08}). See Proposition~\ref{prop:bernstein-low-dim} below.  
\end{rema}

In all dimensions, we have the following sheeting theorem. 

\begin{theo}\label{theo:main-weakly-st-sheeting}
Let $\Lambda, H_0 >0$ and $n\geq 3.$ Suppose that $\Sigma^{n}\subset B_{R}(0)\subset \RR^{n+1}$ is an immersed hypersurface with $\cH^{n}(\Sigma) \leq \Lambda R^{n}$, with constant scalar mean curvature $H$ such that $|H| \leq H_{0}R^{-1}$ and with $\mathcal{H}^{n-7+\alpha}((\overline{\Sigma}\setminus \Sigma) \cap B_{R}(0))=0$ for all $\alpha >0$ (in other words, $\Sigma$ may have a co-dimension $7$ singular set). Suppose that 
$\Sigma$ contains no point where $\Sigma$ intersects 
itself transversely (or equivalently, by the maximum principle, for each $p \in \Sigma$ where $\Sigma$ is not embedded there is $\rho >0$ such that $\Sigma \cap B^{n+1}_\rho(p)$ is the union of exactly two embedded smooth CMC hypersurfaces intersecting only tangentially), and that $\Sigma$ is weakly stable as a CMC immersion. There exists $\delta_{0}=\delta_{0}(n,H_{0},\Lambda)$ and $C=C(n,H_{0},\Lambda)$ so that if additionally 
\[
\Sigma \subset \{|x^{n+1}| \leq \delta_{0}R\}
\]
then $\overline{\Sigma}\cap B_{R/2}(0)$ separates into the union of the graphs of functions $u_{1} \leq \dots \leq u_{k}$ defined on $B^{n}_{R/2}(0) := B_{R/2}(0)\cap \{x^{n+1}=0\}$ satisfying 
\[
\sup_{B^{n}_{R/2}(0)} \left( |Du_{i}| + R |D^{2}u_{i}|\right) \leq C\delta_{0} 
\]
for $i=1,\dots,k;$ moreover, each $u_i$ is separately a smooth CMC graph. 
\end{theo}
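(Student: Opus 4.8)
The plan is to reduce the sheeting theorem to the Schoen–Simon sheeting theorem for *strongly* stable (in fact embedded/two-valued) hypersurfaces by running a compactness-and-contradiction argument: if no such $\delta_0$ exists, we get a sequence $\Sigma_j$ of weakly stable CMC hypersurfaces with $\cH^n(\Sigma_j)\le\Lambda$, $|H_j|\to 0$ (after rescaling to $R=1$), contained in slabs $\{|x^{n+1}|\le\delta_j\}$ with $\delta_j\to 0$, for which the conclusion fails at scale $1/2$. By the compactness theorem of \cite{BW:stableCMC}, a subsequence converges as varifolds to a stationary integral varifold $V$ supported in the hyperplane $\{x^{n+1}=0\}$, hence $V=m\,|B_1^n|$ for a positive integer $m$ (the multiplicity). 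The content is then to upgrade this weak/varifold convergence to $C^2$ sheeting on $B_{1/2}^n$; since the limit is a flat multiplicity-$m$ plane, the regularity theory of \cite{BW:stableCMC} gives that the convergence is smooth and multi-sheeted on $B_{1/2}^n$ away from a closed set of Hausdorff dimension $\le n-7$, and one must rule out that this singular set is nonempty and that the sheets fail the gradient/Hessian bound.

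**The key steps**, in order: (1) Normalize $R=1$ and set up the contradiction sequence. (2) Apply the varifold compactness theorem of \cite{BW:stableCMC} to extract a limit $V$; identify $V$ as a multiplicity-$m$ hyperplane using the slab confinement and monotonicity. (3) Apply the sheeting/regularity conclusion of \cite{BW:stableCMC} at the flat limit to obtain, on a slightly smaller ball, convergence of $\Sigma_j$ (away from a codimension-$7$ set) to the $m$-fold plane with the sheets either coinciding or separating; the crux is then that if the sheets genuinely separate, one can pass to a (further rescaled) tangent/blow-up and extract a *nonzero* Jacobi-field-type limit or a lower-dimensional stable minimal cone, which the Simons rigidity lemma (Lemma~\ref{lemm:cone-small-curvature}) forces to be a plane, i.e.\ the blow-up is a harmonic function. (4) Use the ``one-end'' property of complete weakly stable minimal hypersurfaces (with small singular set), as established in Appendix~\ref{appendixCCZ} generalizing \cite{ChengCheungZhou}, to constrain the limit: a genuinely multi-sheeted limit over $B_{1/2}^n$, when one ``unfolds'' it or examines it at a point where two sheets meet, would produce a complete weakly stable minimal hypersurface with at least two ends, a contradiction — this is the mechanism that upgrades ``index $\le 1$'' information to the flatness transfer. (5) Conclude that each $\Sigma_j$, for $j$ large, is on $B_{1/2}^n$ a union of ordered graphs $u_1^{(j)}\le\cdots\le u_k^{(j)}$ with $|Du_i^{(j)}|+|D^2u_i^{(j)}|\to 0$, contradicting the failure of the conclusion; that each $u_i$ is separately a smooth CMC graph follows since distinct graphical sheets of an immersion without transverse intersections are themselves embedded CMC hypersurfaces.

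**The main obstacle** I expect is Step (3)–(4): correctly *localizing* the one-end theorem so that the ``flatness at scale $1$'' it provides can be transferred down to scale $1/2$ (cf.\ Remark~\ref{rema:transfer}). The one-end result is a global statement about *complete* weakly stable minimal hypersurfaces, but here we only have a hypersurface in a ball that is close to a plane; the limit $V$ could a priori be a multiplicity-$m$ plane arising as a limit of $m$ disjoint nearly-flat sheets (harmless) or of sheets that ``pinch'' and carry nontrivial geometry at small scales (like rescaled catenoids). Ruling out the latter requires a delicate blow-up: one rescales around a putative bad point, uses the curvature estimate of Schoen–Simon–Yau / Schoen–Simon for the *strongly stable minimal* limit pieces to get local graphical control, and then invokes both the Simons cone rigidity and the one-end property to show the blow-up limit is a single affine plane with the correct multiplicity — any honest separation of ends in the blow-up contradicts weak stability. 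Managing the interaction of the (possibly present) codimension-$7$ singular set with this blow-up, and ensuring the mass bound $\Lambda$ is respected under all rescalings so that \cite{BW:stableCMC} continues to apply, is where the technical care concentrates.
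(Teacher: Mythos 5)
Your overall architecture---contradiction/compactness via \cite{BW:stableCMC}, identification of the varifold limit as a multiplicity-$m$ plane, the partial sheeting theorem giving smooth convergence away from one point, and the combination of Simons' rigidity (Lemma~\ref{lemm:cone-small-curvature}) with the singular one-end theorem (Theorem~\ref{theo:CCZ-non-smooth}) to transfer flatness from large to small scales---is exactly the paper's. But the step you flag as ``the main obstacle'' is precisely the step you have not supplied, and it is the heart of the proof. The paper's mechanism is a quantitative point-picking: fix $\eta<\sqrt{n-1}$ and, for each center $z$, let $\delta(\Sigma_j,z)$ be the infimal radius $r$ such that $\Sigma_j\setminus\overline{B_r(z)}$ is smooth and satisfies the \emph{scale-invariant} estimate $|A_{\Sigma_j}|(x)\,|x-z|\le\eta$; one minimizes over $z$, assumes the minimal radius $\delta_j$ is positive, rescales $B_{\delta_j}(z_j)$ to unit size, and applies Proposition~\ref{prop:bernstein-all-dim} to the limit---whose hypothesis \eqref{eq:curv-est-bernstein} is exactly what the definition of $\delta_j$ hands you. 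A \emph{second} application of the partial sheeting theorem to the rescaled surfaces, followed by a \emph{second} rescaling about the new bad point, then contradicts the minimality of $\delta_j$. Without specifying the scale-invariant quantity being controlled and the threshold $\sqrt{n-1}$, you cannot verify the hypotheses of Lemma~\ref{lemm:cone-small-curvature} or Proposition~\ref{prop:bernstein-all-dim} on any blow-up, so your steps (3)--(4) do not close.

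Two further inaccuracies. First, the proposed ``nonzero Jacobi-field-type limit / harmonic blow-up'' is not available here: linearizing presupposes that the sheets are already graphical with controlled separation near the bad point, which is what is being proved; the paper's argument is entirely nonlinear (scale-invariant curvature estimates plus cone rigidity). Second, the one-end theorem is not used to rule out multi-sheeted limits by ``unfolding at a point where two sheets meet''; the blow-up limit may legitimately be a union of parallel planes, each with multiplicity. Theorem~\ref{theo:CCZ-non-smooth} is applied \emph{inside} Proposition~\ref{prop:bernstein-all-dim} to each connected component of the blow-up, where the one-endedness forces the spherical cross-sections to be connected, hence the tangent cone at infinity to be a single plane whose density at infinity matches the density at interior points, whence flatness by monotonicity. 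The multiplicity $k$ of the sheeting is only resolved at the very end, after the uniform curvature bound is in hand, by standard elliptic estimates.
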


{
\begin{rema}
The constants in Theorems \ref{theo:weakly-st-low-dim-est} and \ref{theo:main-weakly-st-sheeting} depend on an upper bound for the mean curvature $H_{0}$. This cannot be removed; indeed, consider the hypersphere $\Sigma = \partial B_{r}(0)$, which is a weakly stable CMC embedding. Note that as $r\to0$, the curvature of $\Sigma$ blows up (in spite of the fact that $\Sigma$ is eventually contained in any slab). 
\end{rema}
}
\subsection{Results for varifolds} In view of \cite[Theorem 2.1]{BW:stableCMC}, Theorems \ref{theo:weakly-st-low-dim-est} and \ref{theo:main-weakly-st-sheeting} above imply the following stronger results for integral varifolds. We refer to \cite[Section 2.1]{BW:stableCMC} for precise definitions. Here we recall, slightly imprecisely, that: 
\begin{itemize}
 \item  a \textit{classical singularity} of an integral varifold $V$ is a point $p$ such that, in a neighbourhood of $p$, ${\rm spt} \, \|V\|$ (where $\|V\|$ denotes the weight measure associated with $V$) is given by the union of three or more embedded $C^{1,\alpha}$ hypersurfaces-with-boundary that intersect pairwise only along their common boundary $L$ containing $p$ and such that at least two of the hypersurfaces-with-boundary meet transversely along $L$;
 \item  a (two-fold) \textit{touching singularity} of an integral varifold $V$ is a point $p \in {\rm spt} \, \|V\|$ such that ${\rm spt} \, \|V\|$ is not embedded at $p$ and in a neighborhood of $p$, the ${\rm spt} \, \|V\|$ is given by the union of exactly two $C^{1,\alpha}$ embedded hypersurfaces with only tangential intersection;
 \item (see \cite{Simon:GMT} for details) the first variation of an integral varifold $V$ is a continuous linear functional on $C^1_c$ ambient vector fields and it represents the rate of change of the varifold's weight measure (area functional) computed along ambient deformations induced by the chosen vector field; when the first variation is a Radon measure (i.e.~it extends to a continuous linear functional on $C^0_c$ vector fields) the varifold is said to have locally bounded first variation; when, in addition, this Radon measure is absolutely continuous with respect to the weight measure $\|V\|$, and its Radon--Nikodym derivative (called generalized mean curvature of $V$) is in $L^p(\|V\|)$, the first variation of $V$ is said be locally summable to the exponent $p$ (with respect to the weight measure $\|V\|$). By the fundamental regularity theory of Allard, the class of integral $n$-varifolds $V$ with \textit{first variation locally summable to an exponent $p>n$} is compact in the varifold topology under uniform mass and $L^{p}$ mean curvature bounds, and such a $V$ enjoys an embryonic regularity property: there exists a dense open subset of ${\rm spt} \, \|V\|$ in which $\text{spt}\,\|V\|$ is $C^{1,\alpha}$ embedded, with $\alpha=1-\frac{n}{p}$ if $n < p < \infty$ and $\alpha \in (0, 1)$ arbitrary of $p = \infty$ (see \cite{Allard}). 
\end{itemize}

{In low dimensions, we have the following curvature estimates:}

\begin{theo-lowdim}\label{theo:varifolds-low-dim-est}
Let $\Lambda, H_0 >0$. For $3 \leq n \leq 6$, suppose that $V \in IV_n(B_{R}(0))$ is an integral varifold with $\|V\|(B_R(0)) \leq \Lambda R^n$. Assume that the following hypotheses hold:
\begin{enumerate}
 \item the first variation of $V$ is locally summable to an exponent $p>n$ (with respect to the weight measure $\|V\|$);
 \item $V$ has no classical singularities;
 \item whenever $p$ is a (two-fold) touching singularity there exists $\rho>0$ such that $${\mathcal{H}}^{n}\left(\{y \in \supp{\|V\|} \cap B_\rho(p) : \Theta(\|V\|,y) = \Theta(\|V\|,p)\}\right)=0,$$
 where $\Theta$ is the density;
 \item the $C^1$ embedded part of $\supp{\|V\|}$ (non-empty by Allard's regularity theorem) has generalized mean curvature $h$ with $|h|=H$ for a constant $H\leq H_0$ (see \cite{BW:stableCMC} for the variational formulation of this assumption, which makes sense for a $C^1$ hypersurface and leads to its $C^2$ regularity by standard elliptic regularity theory);
 \item the $C^2$ immersed part of $\supp{\|V\|}$ (which is a CMC immersion in view of (4)) is weakly stable, i.e.\ stable for the area measure under volume-preserving variations.
\end{enumerate}

Then $\Sigma=\supp{\|V\|} \cap B_{R}(0)$ is a smooth immersion and there is $C=C(H_{0},\Lambda)$ so that 
\[
\sup_{x \in \Sigma \cap B_{R/2}(0)} |A_{\Sigma}|(x) \leq C R^{-1},
\]
where $A_{\Sigma}$ denotes the second fundamental form of $\Sigma$.
\end{theo-lowdim}

We also have the following sheeting theorem in all dimensions:
\begin{theo-highdim}\label{theo:varifolds-main-weakly-st-sheeting}
Let $\Lambda, H_0 >0$. For any $n\geq 3$ suppose that $V \in IV_n(B^{n+1}_{R}(0))$ is an integral varifold with $\|V\|(B^{n+1}_R(0)) \leq \Lambda R^n$. Assume that the following hypotheses hold:
\begin{enumerate}
 \item the first variation of $V$ is locally summable to an exponent $p>n$ (with respect to the weight measure $\|V\|$); 
 \item $V$ has no classical singularities;
 \item whenever $p$ is a (two-fold) touching singularity there exists $\rho>0$ such that $${\mathcal{H}}^{n}\left(\{y \in \supp{\|V\|} \cap B_\rho^{n+1}(p) : \Theta(\|V\|,y) = \Theta(\|V\|,p)\}\right)=0,$$
 where $\Theta$ stands for the density;
 \item the $C^1$ embedded part of $\supp{\|V\|}$ (non-empty by Allard's regularity theorem) has generalized mean curvature $h$ with $|h|=H$ for a constant $H\leq H_0$ (see \cite{BW:stableCMC} for the variational formulation of this assumption, which makes sense for a $C^1$ hypersurface and leads to its $C^2$ regularity by standard elliptic methods);
 \item the $C^2$ immersed part of $\supp{\|V\|}$ (which is a CMC immersion in view of (4)) is weakly stable, i.e.\ stable for the area measure under volume-preserving variations.
\end{enumerate}

There exists $\delta_{0}=\delta_{0}(n,H_{0},\Lambda)$ so that if additionally 
\[
\supp{\|V\|} \subset \{|x^{n+1}| \leq \delta_{0}R\}
\]
then $\supp{\|V\|} \cap B_{R/2}(0)$ separates into the union of the graphs of functions $u_{1} \leq \dots \leq u_{k}$ defined on $B^{n}_{R/2}(0) := B_{R/2}(0)\cap \{x^{n+1}=0\}$ satisfying 
\[
\sup_{B^{n}_{R/2}(0)} \left( |Du_{i}| + R |D^{2}u_{i}|\right) \leq \delta_{0} 
\]
for $i=1,\dots,k;$ moreover, each $u_i$ is separately a smooth CMC graph.  
\end{theo-highdim}

\begin{rema}
The extension of the theorems above to the case of an ambient Riemannian manifold follows the same arguments, employing the result in \cite{BW:stableCMC-future}. 
\end{rema}

\begin{rema}
Note that Theorems \ref{theo:weakly-st-low-dim-est}, \ref{theo:main-weakly-st-sheeting}, \hyperref[theo:varifolds-low-dim-est]{$1^{\prime}$} and \hyperref[theo:varifolds-main-weakly-st-sheeting]{$2^{\prime}$} hold in particular for $H=0$; in this case, the vanishing of the mean curvature prevents touching singularities, therefore assumption (3) in Theorems \hyperref[theo:varifolds-low-dim-est]{$1^{\prime}$} and  \hyperref[theo:varifolds-main-weakly-st-sheeting]{$2^{\prime}$} is redundant. For $H=0$ our results generalize the works of Schoen--Simon--Yau \cite{SSY}, Schoen--Simon \cite{SchoenSimon} and the third author \cite[Theorem 3.3]{W:annals2014} from strong to weak stability. 
\end{rema}

\begin{rema}
The conclusions of Theorems \ref{theo:main-weakly-st-sheeting} and \hyperref[theo:varifolds-main-weakly-st-sheeting]{$2^{\prime}$} clearly fail (even for strongly stable minimal hypersurfaces) for $n\geq 7$ without any flatness assumption, by the construction of Hardt--Simon \cite{HardtSimon:foliation}.
We also note that singularities do occur in stable CMC hypersurfaces (with $H \neq 0$) of dimension $\geq 7$, as shown by a recent construction of Irving (\cite{Irving})  modifying the earlier work of 
Caffarelli--Hardt--Simon (cf.\ \cite{CHS}). 
\end{rema}

\subsection{A remark on bounded index minimal surfaces} 
\label{boundedindexminimal}
The discussion in the paragraph preceding Section~\ref{small-sing} notwithstanding, the techniques developed in this paper are relevant for the study of bounded index minimal surfaces in Riemannian $(n+1)$-manifold for $n\geq 7$ (i.e., in the singular dimensions). For example, if $\Sigma^{n}\subset B_{R}(0)\subset \RR^{n+1}$ is a minimal surface with $\Index(\Sigma) \leq 1$, $\cH^{n}(\Sigma) \leq \Lambda R^{n}$, and $\Sigma\subset \{|x^{n+1}| \leq \delta_{0}R\}$, then by a straightforward application of the Schoen--Simon sheeting theorem \cite{SchoenSimon}, $\Sigma$ splits into smooth sheets away from a given point. The argument used to prove Proposition \ref{prop:curv-est-sheeting-all-dim} extends to this situation to conclude that the sheets are connected by a small region that is close (depending on $\delta_{0}$) to an index one minimal hypersurface in $\RR^{n+1}$ (with small singular set), \emph{having regular ends}. This last condition is the non-trivial conclusion; it follows from the argument in Proposition \ref{prop:curv-est-sheeting-all-dim}, transferring flatness from large scales to small scales (see Remark \ref{rema:transfer}). Using the arguments in \cite{CKM} (cf.\ \cite{BS:index}), similar statements hold for $\Index(\Sigma)\leq I_{0}$. See also \cite{Tysk}.

\subsection{Outline of the paper} Theorem \ref{theo:weakly-st-low-dim-est} will be proved in Section \ref{proofof1}, building on the Bernstein-type result given in Proposition \ref{prop:bernstein-low-dim} below (Section \ref{Bernsteintypetheorems}). Theorem \ref{theo:main-weakly-st-sheeting} will be proved in Section \ref{proofof2}, building on a different Bernstein-type result (Proposition \ref{prop:bernstein-all-dim} in Section \ref{Bernsteintypetheorems}). The proofs of both Bernstein-type results rely on a global result for weakly stable minimal hypersurfaces, namely the fact that they must be one-ended. This is proved in \cite{ChengCheungZhou} in the case of \emph{smooth} embedded hypersurfaces; this result, recalled in Theorem \ref{theo:CCZ-smooth} of Appendix \ref{appendixCCZ}, is all that is actually needed for Theorem \ref{theo:weakly-st-low-dim-est}, together with a classical blow-up argument. For the proof of Theorem \ref{theo:main-weakly-st-sheeting}, we extend the one-ended conclusion to the situation where the hypersurface may have a codimension-$7$ singular set; this is done in Theorem \ref{theo:CCZ-non-smooth} of Appendix \ref{appendixCCZ}. The proof of Theorem \ref{theo:main-weakly-st-sheeting} also relies on a careful blow-up argument for which we need to use certain results from \cite{BW:stableCMC}, which we recall in Appendix \ref{appendixBW}.

\subsection{Acknowledgements} C.B. was supported by an EPSRC grant EP/S005641/1. O.C. was supported by an EPSRC grant EP/K00865X/1 as well as the Oswald Veblen Fund and NSF grants DMS-1638352 and DMS-1811059.  We are grateful to the referee for several useful suggestions. 

\section{Two Bernstein-type theorems}
\label{Bernsteintypetheorems}

We begin with the following Bernstein type result, which will yield Theorem \ref{theo:weakly-st-low-dim-est} when combined with a standard blow-up argument. We note that such a result holds for $n=2$ without the embededness or area growth assumptions, as discussed above. As a notational remark, we stress that we will always write $\nabla$ to denote the intrinsic gradient on a hypersurface, and will instead denote by $\nabla^{\RR^{n+1}}$ the ambient gradient.
\begin{prop}\label{prop:bernstein-low-dim}
For $3 \leq n \leq 6$, suppose that $\Sigma^{n}\subset \RR^{n+1}$ is a connected, weakly stable, immersed minimal hypersurface with no singularities and with $\cH^{n}(\Sigma\cap B_{R}) \leq \Lambda R^{n}$ for some constant $\Lambda \geq 1$ and all $R>0$. When $n=6$ assume either that $\Lambda = 3 - \delta$ for some $\delta >0$ or that $\Sigma$ is embedded. Then $\Sigma$ is a hyperplane. 
\end{prop}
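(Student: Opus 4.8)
### Proof Strategy for Proposition~\ref{prop:bernstein-low-dim}

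The plan is to reduce to the strongly stable case via a blow-down argument and the one-ended property. First I would recall that weakly stable minimal immersions have Morse index at most one, so in particular the Schoen--Simon--Yau curvature estimate holds on any \emph{two-sided} stable piece; more importantly, by the result of Cheng--Cheung--Zhou (Theorem~\ref{theo:CCZ-smooth}), a complete weakly stable minimal immersion with the stated area bound (hence with Euclidean volume growth, which guarantees completeness of the blow-downs and finiteness of ends) has exactly one end. The key point is that having only one end forces $\Sigma$ to be \emph{strongly} stable: if $\Sigma$ were not strongly stable, one could find a compactly supported $\phi$ violating the strong stability inequality, but using the one-end property one can modify $\phi$ by adding a bounded function that is constant near infinity on the single end so as to arrange $\int_\Sigma \phi = 0$ without destroying the strict sign of $\delta^2 J(\phi,\phi)$; the obstruction to doing this with positive $\phi$ is exactly what two ends would allow, and with one end it cannot happen. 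Thus $\Sigma$ satisfies the full (unconstrained) stability inequality.

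Once $\Sigma$ is known to be strongly stable, I would invoke the classical Bernstein-type theorem for strongly stable minimal hypersurfaces: Schoen--Simon--Yau \cite{SSY} (together with the extension in \cite{SchoenSimon} for the embedded case, and \cite{Wic08} for the immersed case with $\Lambda = 3-\delta$) gives that a complete immersed strongly stable minimal hypersurface of dimension $3 \leq n \leq 6$ with Euclidean area growth is a hyperplane. The dimension restriction $n \leq 6$ is precisely where this input is valid (for $n=7$ the Simons cone intervenes), and the case distinction at $n=6$ in the statement mirrors exactly the two known forms of the strongly stable Bernstein theorem in that borderline dimension: either embeddedness (Schoen--Simon) or the mass bound $\Lambda = 3-\delta$ (Wickramasekera \cite{Wic08}). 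For $3 \leq n \leq 5$ no such restriction is needed. This reduces the whole proposition to the transition ``weakly stable $\Rightarrow$ strongly stable,'' modulo citing known strong-stability Bernstein theorems.

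I expect the main obstacle to be making rigorous the claim that one end implies strong stability — specifically, the construction of the corrector function with zero integral. The naive approach is: given $\phi \in C_c^\infty(\Sigma)$ with $\int_\Sigma \phi \neq 0$, one wants $\psi$ with $\int_\Sigma \psi = -\int_\Sigma \phi$ and $\delta^2 J(\phi + t\psi, \phi+t\psi) \leq \delta^2 J(\phi,\phi) + o(1) < 0$ as some parameter degenerates. Because $\Sigma$ has infinite volume and only one end, one can take $\psi$ supported on a far-away annular region $B_{2\rho}\setminus B_\rho$ of the single end, with $|\nabla \psi|$ small (of order $\rho^{-1}$ times a logarithmic cutoff in the critical dimensions, or a genuine power decay in higher dimensions using the volume bound) while $\int \psi$ is a fixed negative constant; the Dirichlet energy $\int |\nabla \psi|^2$ and the cross term $\int |A_\Sigma|^2 \phi\psi$ (the supports are eventually disjoint, so this cross term vanishes) can be made negligible. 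The delicate part is ensuring such $\psi$ exists with controlled Dirichlet energy on a manifold that is merely assumed to have Euclidean \emph{upper} volume bounds and one end — this is where the single-end hypothesis is genuinely used, as on a two-ended manifold (the catenoid) the analogous construction fails because $\int \psi$ cannot be made nonzero with small energy. I would handle this by a standard logarithmic cutoff argument on the end, exactly parallel to the classical proof that parabolic ends carry no nonconstant bounded harmonic functions, and I anticipate that the cleanest route is to state this as a separate lemma (``weakly stable + one end + infinite volume $\Rightarrow$ strongly stable'') and prove it in the appendix alongside the one-end results.
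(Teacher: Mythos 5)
Your route is genuinely different from the paper's, and its core reduction --- under the hypotheses of the proposition, weak stability upgrades to \emph{strong} stability, after which the Bernstein theorems of \cite{SSY}, \cite{SchoenSimon}, \cite{Wic08} finish the job --- is viable. But the mechanism you give for the upgrade is not the right one, and one of your supporting claims is false. The annular corrector $\psi$ with $\mathrm{spt}\,\psi \subset \Sigma\cap (B_{4\rho}\setminus B_{\rho/2})$, $\int_\Sigma \psi = -\int_\Sigma\phi$ and $\int_\Sigma|\nabla\psi|^2\to 0$ exists because of the \emph{Euclidean volume growth}, not because of one-endedness: take $\psi=-c_\rho\chi_\rho$ with $\chi_\rho$ a cutoff equal to $1$ on $B_{2\rho}\setminus B_{\rho}$ and $|\nabla\chi_\rho|\leq C\rho^{-1}$; the monotonicity formula gives $\int_\Sigma\chi_\rho\geq c_n\rho^{n}$ (so $c_\rho\leq C\rho^{-n}$), while the hypothesis $\cH^n(\Sigma\cap B_R)\leq\Lambda R^n$ gives $\int_\Sigma|\nabla\psi|^2\leq C\Lambda\rho^{-n-2}\to0$; since the supports are disjoint the cross terms vanish and $\phi+\psi$ violates the constrained stability inequality. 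This works on \emph{any} end with Euclidean area growth, so your assertion that the construction fails on the catenoid is wrong --- it succeeds there, which is precisely why the catenoid, despite having index one, is \emph{not} weakly stable (cf.\ the discussion preceding Section \ref{small-sing}). The lemma you should isolate is therefore ``complete, connected, weakly stable, Euclidean volume growth implies strongly stable,'' with no reference to the number of ends; if you instead try to make one-endedness carry the argument (e.g.\ by adding a bounded function ``constant near infinity''), you will run into trouble, since such a function is not compactly supported and truncating it returns you to the volume-growth computation above. No logarithmic cutoffs are needed for $n\geq 3$.

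For comparison, the paper never proves strong stability everywhere. It shows, by combining two test functions with disjoint supports, that $\Sigma$ is strongly stable outside a compact set; this yields the decay $|A_\Sigma|(y)\leq C/|y|$ via \cite{SSY,SchoenSimon,Wic08}, so every tangent cone at infinity is, by Simons' theorem, a hyperplane with some multiplicity $m$. The one-end theorem (Theorem \ref{theo:CCZ-smooth}) is then used --- together with transversality of $\Sigma$ with large spheres and the fact that $\SS^{n-1}$ admits no nontrivial connected cover --- to force $m=1$, and the monotonicity formula concludes. The reason for this longer route is that it is the one that survives in the singular, high-dimensional setting of Proposition \ref{prop:bernstein-all-dim}, where strong stability alone gives no Bernstein theorem (the Hardt--Simon hypersurfaces are strongly stable and non-flat); your shortcut is specific to $n\leq 6$ and would not replace the multiplicity-one argument needed later in the paper.
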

\begin{proof}
We begin by showing that $\Sigma$ is (strongly) stable outside of a compact set. If all of $\Sigma$ is strongly stable, then by \cite{SSY,SchoenSimon} the proposition follows. If not, we may choose $R>0$ so that $\Sigma\cap B_{R}$ is unstable. If $\Sigma\setminus B_{2R}$ is unstable, then we may find functions $\varphi_{1} \in C^{\infty}_{c}(\Sigma\cap B_{R})$ and $\varphi_{2} \in C^{\infty}_{c}(\Sigma\setminus B_{2R})$ so that
\[
\int_{\Sigma} |A_{\Sigma}|^{2} \varphi_{i}^{2} > \int_{\Sigma} |\nabla \varphi_{i}|^{2} . 
\]
By weak stability, $\int_{\Sigma} \varphi_{i} \neq 0$ for $i=1, 2$. Choose $t \in {\mathbb R}$ so that
\[
\int_{\Sigma} \varphi_{1} + t \varphi_{2} = 0. 
\]
Because $\varphi_{1},\varphi_{2}$ have disjoint support, we find that 
\[
\int_{\Sigma} |A_{\Sigma}|^{2} (\varphi_{1}+t\varphi_{2})^{2} > \int_{\Sigma} |\nabla (\varphi_{1} + t \varphi_{2})|^{2}.
\]
This contradicts the weak stability of $\Sigma$. Thus, $\Sigma$ is stable outside of a compact set.

We first assume that $\Sigma$ is embedded. We will explain below the modifications for the cases $\Sigma$ immersed and $3\leq n\leq 5,$ or $\Sigma$ immersed, $n=6$ and $\Lambda = 3 -\delta$. In the embedded case, we first show that there exists an integer $m$ such that any tangent cone at infinity is a hyperplane with multiplicity $m$. 
\begin{claim}\label{claim:blowdown-low-dim}
There is $m \in \NN$ so that for any sequence $\lambda_{j}\to0$, a subsequence of $\Sigma_{j}:=\lambda_{j}\Sigma$ converges smoothly and graphically on any compact subset of $\RR^{n+1}\setminus\{0\}$ to a hyperplane of multiplicity $m$.
\end{claim}
\begin{proof}[Proof of the Claim]
By \cite[Theorem 3]{SchoenSimon} (for $n\leq 5$ the estimates in \cite{SSY} suffice) the magnitude of the second fundamental form decays as $\frac{1}{|y|}$ for $|y|\to \infty$, namely there exist $R_0>0$ and a constant $C>0$ such that $|A|(y) \leq \frac{C}{|y|}$ for $y \in \Sigma$ and $|y| \geq R_0$, where $|y|$ denotes the Euclidean norm of $y$ in $\RR^{n+1}$. {Therefore, there is a subsequence $\lambda_{j}\to 0$ (not relabeled) so that $\Sigma_{j}=\lambda_{j}\Sigma$ converges smoothly (possibly with multiplicity) on compact subsets of $\RR^{n+1}\setminus\{0\}$ to $\cC$, a smooth minimal surface in $\RR^{n+1}\setminus\{0\}$.} The smooth convergence implies that $\cC \setminus \{0\}$ is (strongly) stable: by \cite{Simons} and the dimensional restriction, $\cC$ is a flat hyperplane with some multiplicity $m \in \NN$. Finally, the fact that the multiplicity $m$ is independent of the sequence $(\lambda_{j})$ is an immediate consequence of the monotonicity formula. 
\end{proof}

The preceding claim implies that there exists $r_0$ such that, whenever $r>r_0$, the sphere $\partial B^{n+1}_r(0)$ intersects $\Sigma$ transversely: indeed, if that failed, we could produce a sequence of radii $r_i \to \infty$ where transversality fails but the corresponding sequence $\frac{1}{r_i} \Sigma$ would fail to converge graphically at some point on $\partial B^{n+1}_1(0)$. 

Let $r>r_0$. The transversality condition established amounts to the fact that the gradient of $h:\Sigma \setminus \overline{B}_{r_0}(0) \to \RR$, $h(x)=|x|$ (the ambient distance to the origin) is everywhere non-vanishing. By \cite[Theorem 3.1]{MilnorMorse} this implies that, for any $R>r$, $\Sigma \cap \left(B_R(0) \setminus B_r(0)\right)$ deformation retracts onto $\Sigma \cap \partial B_r(0)$. In particular, the number of connected components of $\Sigma \cap \left(B_R(0) \setminus B_r(0)\right)$ equals the number of connected components of $\Sigma \cap \partial B_r(0)$. Denoting with $D_1, \dots, D_N$ the connected components of $\Sigma \cap \partial B_r(0)$, we consider, for every $R>r$, $N$ disjoint open sets $A^R_1, \ldots, A^R_N$, each containing a single connected component of $\Sigma \cap \left(B_R(0) \setminus B_r(0)\right)$ and labelled so that $A^R_j$ contains $D_j$. Let $\tilde{A}_j=\cup_{R>r} A^R_j$: the open sets $\tilde{A}_j$ for $j=1, \dots, N$ are disjoint by construction and cover $\Sigma \setminus B_r(0)$, so the number of ends of $\Sigma$ is at least $N$.

The result of \cite{ChengCheungZhou} (see Theorem \ref{theo:CCZ-smooth} below) gives that $\Sigma$ is one-ended, i.e.~$N=1$, and so, for all $r>r_0$, $\Sigma \cap \partial B_r(0)$ is connected. On the other hand, $\SS^{n-1}$ is simply connected and, as such, does not admit a nontrivial connected cover. Therefore, recalling claim \ref{claim:blowdown-low-dim}, we conclude that $m=1$, or equivalently, that the density of $\Sigma$ at infinity is $1$. Hence by the monotonicity formula $\Sigma$ is a cone with density at the vertex (which is equal to the density at infinity) equal to 1. Since the density of $\Sigma$ at any other point is also 1, it follows again by the monotonicity formula that $\Sigma$ is translation invariant along every direction so it is a hyperplane. 

We now consider the case where $\varphi: \Sigma\to\RR^{n+1}$ is only assumed to be immersed and either $3\leq n\leq 5$ or $n=6$ and $\Lambda = 3-\delta$. In this case, we still have, by the local uniform mass bounds, that for any sequence $\lambda_{j} \to 0$, a subsequence of $\left(\lambda_{j}\right)_{\#} |\varphi(\Sigma)|$ converges as varifolds to a stationary cone ${\mathcal C}.$ By the locally uniform pointwise curvature bounds (given by  
\cite{SSY} for $3 \leq n \leq 5$ or by \cite{Wic08} for $n=6$ and $\Lambda = 3 - \delta$), it follows that ${\rm spt} \, \|{\mathcal C}\|$ is smoothly immersed away from the origin, and the convergence is smooth and graphical in compact subsets of ${\mathbb R}^{n+1} \setminus \{0\}$; moreover, since $\Sigma \setminus B_{2R}$ is stable, it also follows that  the stability inequality 
$\int |A_{\mathcal C}|^{2}\zeta^{2} \leq \int |\nabla \, \zeta|^{2}$ holds true for every $\zeta \in C^{1}_{c}({\rm spt} \, \|{\mathcal C}\| \setminus \{0\})$, i.e.\ that ${\rm spt} \, \|{\mathcal C}\| \setminus \{0\}$ is stable as an immersion.  (Indeed if $M_{j}$ is any sequence of immersed minimal hypersurfaces of an open set $U \subset {\mathbb R}^{n+1}$ with no singularities and with $\partial \, M_{j} \cap U = \emptyset$, and if $\limsup_{j \to \infty} \, {\mathcal H}^{n}(M_{j} \cap K) < \infty$ and $\limsup_{j \to \infty} \, \sup_{x \in M_{j} \cap K} \, |A_{M_{j}}(x)| < \infty$ for each compact $K \subset U$, then for any given compact set $K \subset U$, there is a fixed radius $\sigma  = \sigma(K)>0$ independent of $j$ such that (after passing to a subsequence without changing notation) for every $j$ and every $p \in M_{j} \cap K$, 
$M_{j} \cap B_{\sigma}(p)$ is the union of smooth embedded graphs with small gradient over some hyperplanes $P_{j,1}, \ldots, P_{j, N_{j}}$ passing through $p$ (with $\sum_{k=1}^{N_{j}}|P_{j, k}|$ equal to the tangent cone to $M_{j}$ at at $p$), where $N_{j} \leq N$ for some $N$ independent of $j$ and $p$; if $V$ is the varifold limit of $(M_{j})$, then for any $z \in {\rm spt} \, \|V\| \cap U$, choosing a sequence of points $z_{j} \in M_{j}$ with 
$z_{j} \to z$ and applying this fact  to $B_{\sigma}(z_{j}) \cap M_{j},$ we get, passing to a subsequence, that the hyperplanes $P_{j, k} \to P_{k}$ for $k =1, \ldots, Q$ for some $Q \leq N$, and so we can write 
$M_{j} \cap B_{\sigma/2}(z_{j})$ as a union of embedded minimal graphs over the fixed planes $P_{1}, \ldots, P_{Q}$ with small gradient. By the higher derivative estimates for solutions to uniformly elliptic equations, we then see that 
${\rm spt} \, \|V\| \cap  B_{\sigma/4}(z)$ is the union of smoothly embedded minimal graphs over $P_{1}, \ldots, P_{Q},$ i.e.\ that ${\rm spt} \, \|V\| \cap U$ is immersed, and that the convergence of 
$(M_{j})$ is smooth and graphical (via normal sections over ${\rm spt} \, \|V\| \cap U$) in any compact subset of $U$. From this, it is easy to verify that if $M_{j}$ are stable, i.e.\ if $\int_{M_{j}} |A_{M_{j}}|^{2} \zeta^{2} \leq \int_{M_{j}} |\nabla \, \zeta|^{2}$ for each $\zeta \in C^{1}_{c}(M_{j})$ then $\int |A_{{\rm spt} \, \|V\|}|^{2}\zeta^{2} \leq \int \nabla \, \zeta|^{2}$ for each $\zeta \in C^{1}_{c}({\rm spt} \, \|V\|)$.)

By Simons' theorem (\cite[Theorem 6.1.1]{Simons}; see the argument in \cite[Appendix B]{Simon:GMT} which is valid when the cone, as in our case, is immersed and stable as an immersion away from the origin), we conclude that ${\mathcal C} = \sum_{\ell = 1}^{M} m_{\ell}|L_{\ell}|$ for some hyperplanes $L_{1}, \ldots, L_{M}$ and positive integers $m_{1}, \ldots, m_{M}.$    
Arguing by contradiction (as in the embedded case), this shows that $\varphi$ is transverse to $\partial B_{r}^{n+1}(0)$ for all $r > r_{0}$ sufficiently large. Again, as in the embedded case, we thus find that the number of connected components of $\varphi^{-1}(B_{R}(0)\setminus B_{r}(0))$ is equal to the number of connected components of $\varphi^{-1}(\partial B_{r}(0))$ for any $R\geq r > r_{0}$. Because $\Sigma$ has only one end by Theorem \ref{theo:CCZ-smooth}, there is only one such component. This proves both that $\cC$ is supported on a single hyperplane, and that it has multiplicity one. Thus, $\Sigma$ is a flat hyperplane by the monotonicity formula. This completes the proof. 
\end{proof}

The proof of Theorem \ref{theo:weakly-st-low-dim-est} will be achieved by employing Proposition \ref{prop:bernstein-low-dim} and a standard blow-up argument (see Section \ref{proofof1}).
We now present a version of Proposition \ref{prop:bernstein-low-dim} that holds in all dimensions. This, in conjunction with the sheeting-away-from-a-point result for weakly stable CMC hypersurfaces from \cite{BW:stableCMC} (recalled in Appendix \ref{appendixBW}, Theorem \ref{theo:sheeting-away-from-pt} below), will imply Theorem \ref{theo:main-weakly-st-sheeting} using a less standard rescaling argument. We point out that, in the proof of the next proposition, we make use of the one-end result of \cite{ChengCheungZhou} for weakly stable CMC hypersurfaces, genralized here to allow a co-dimension $7$ singular set. This generalisation is given in Appendix \ref{appendixCCZ}, Theorem \ref{theo:CCZ-non-smooth}.

\begin{prop}\label{prop:bernstein-all-dim}
For $n \geq 3$, suppose that $V$ is a stationary integral $n$-varifold in $\RR^{n+1}$ with ${\rm spt} \, \|V\|$  a connected set, ${\rm sing} \, V \subset B_{1}(0)$ (so ${\rm spt} \, \|V\|$ is smooth in ${\mathbb R}^{n+1} \setminus B_{1}(0)$) and with ${\rm dim}_{\mathcal H} \, ({\rm sing} \, V) \leq n-7$. Assume that the regular part $\Sigma = {\rm reg} \, V$ ($= {\rm spt} \, \|V\| \setminus {\rm sing} \, V$) is weakly stable and that $V$ satisfies area growth $\Vert V\Vert (B_{R}(0)) \leq \Lambda R^{n}$ for some constant $\Lambda \geq 1$ and all $R >0$. Finally, assume that for some $\varepsilon >0$, $\Sigma$ satisfies 
\begin{equation}\label{eq:curv-est-bernstein}
|A_{\Sigma}|(x) |x| \leq \sqrt{n-1}-\varepsilon
\end{equation}
for $x \in \Sigma \setminus B_{1}$, where $|\cdot|$ denotes the length in $\RR^{n+1}$. Then $\supp \|V\|$ is a hyperplane. 
\end{prop}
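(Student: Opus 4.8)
The plan is to follow the same strategy as in the proof of Proposition~\ref{prop:bernstein-low-dim}, but now the key new input is that the a priori curvature bound \eqref{eq:curv-est-bernstein}, with strict inequality $|A_\Sigma|(x)|x| \le \sqrt{n-1}-\varepsilon$, substitutes for the automatic curvature decay coming from \cite{SSY,SchoenSimon} (which fails in high dimensions). First I would show, exactly as in the low-dimensional case, that $\Sigma = {\rm reg}\, V$ is \emph{strongly} stable outside a compact set: if it were not, one could find disjointly supported $\varphi_1,\varphi_2 \in C_c^\infty(\Sigma)$ with $\int_\Sigma |A_\Sigma|^2 \varphi_i^2 > \int_\Sigma |\nabla\varphi_i|^2$, and since weak stability forces $\int_\Sigma \varphi_i \ne 0$, a linear combination $\varphi_1 + t\varphi_2$ with $\int_\Sigma(\varphi_1+t\varphi_2)=0$ would violate weak stability. (Here $\int_\Sigma \varphi_i$ makes sense since $\varphi_i$ has compact support away from ${\rm sing}\,V$.) Note the minimal hypersurface here has $H=0$, consistent with $V$ being stationary.

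Next I would analyze tangent cones at infinity. By the area bound, for any $\lambda_j \to 0$ a subsequence of $(\lambda_j)_\#\|V\|$ converges as varifolds to a stationary cone $\cC$. The rescaled curvature bound is scale-invariant: $|A_{\lambda_j \Sigma}|(y)|y| = |A_\Sigma|(y/\lambda_j)|y/\lambda_j| \le \sqrt{n-1}-\varepsilon$ for $|y| \ge \lambda_j$, so in the limit ${\rm spt}\,\|\cC\| \setminus \{0\}$ is smoothly immersed with $|A_\cC|(y)|y| \le \sqrt{n-1}-\varepsilon$ everywhere, and the convergence is smooth and graphical on compact subsets of $\RR^{n+1}\setminus\{0\}$ (using, as in the long parenthetical remark in the previous proof, the locally uniform curvature bound to get uniform graphical decompositions over finitely many fixed planes and then elliptic higher-derivative estimates). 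Smooth convergence together with stability of $\Sigma$ outside a compact set transfers the stability inequality $\int |A_\cC|^2 \zeta^2 \le \int |\nabla\zeta|^2$ to $\zeta \in C_c^1({\rm spt}\,\|\cC\|\setminus\{0\})$. Now the crucial point: $\cC$ is a minimal cone, and the link $\cC \cap \SS^n$ is a minimal hypersurface of the sphere whose second fundamental form is controlled by $|A_\cC|$; the hypothesis $|A_\cC| \cdot |x| \le \sqrt{n-1}-\varepsilon$ is precisely the strict form of the pinching condition in Simons' rigidity theorem for minimal hypersurfaces of spheres (invoked as Lemma~\ref{lemm:cone-small-curvature}, due to \cite{Simons}), which forces each component of the link to be a great sphere — equivalently, $\cC = \sum_{\ell=1}^M m_\ell |L_\ell|$ for hyperplanes $L_\ell$ through the origin and positive integers $m_\ell$.

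With the tangent cone at infinity a union of hyperplanes, I would then argue exactly as in Proposition~\ref{prop:bernstein-low-dim}: if the number $M$ of distinct planes (or the multiplicity) were such that $\Sigma$ failed to be transverse to $\partial B_r(0)$ for arbitrarily large $r$, the smooth graphical convergence of $\frac1r \Sigma$ to $\cC$ would be contradicted, so $\Sigma \setminus \overline{B}_{r_0}(0)$ meets every sphere $\partial B_r(0)$, $r>r_0$, transversely. Then $x \mapsto |x|$ has nonvanishing gradient on $\Sigma \setminus \overline{B}_{r_0}(0)$, so by \cite[Theorem 3.1]{MilnorMorse} each annular region $\Sigma \cap (B_R(0)\setminus B_r(0))$ deformation retracts onto $\Sigma \cap \partial B_r(0)$, and the number of ends of $\Sigma$ is at least the number of connected components of $\Sigma \cap \partial B_r(0)$. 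The one-end theorem for weakly stable minimal hypersurfaces with codimension-$7$ singular set (Theorem~\ref{theo:CCZ-non-smooth} of Appendix~\ref{appendixCCZ}) gives that $\Sigma$ is one-ended, hence $\Sigma\cap\partial B_r(0)$ is connected; since $\SS^{n-1}$ is simply connected it has no nontrivial connected cover, forcing $M=1$ and multiplicity one. Thus $\cC$ is a single hyperplane with multiplicity one, the density of $V$ at infinity is $1$, and by the monotonicity formula (the density at every point being at most, hence equal to, $1$) $V$ is a multiplicity-one plane. I expect the main obstacle to be the careful application of Simons' rigidity in the immersed-with-multiplicity setting — making sure the strict pinching $|A_\cC||x|\le\sqrt{n-1}-\varepsilon$ is applied componentwise to the link and genuinely rules out all nonflat stable minimal cones — together with justifying the smooth graphical convergence (and the stability transfer) of the blow-down across the possible singular set and multiplicities, which is where the long parenthetical argument from the previous proof has to be invoked essentially verbatim.
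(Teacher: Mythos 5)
Your proposal follows essentially the same route as the paper's proof: blow down, use the scale invariance of \eqref{eq:curv-est-bernstein} to pass the pinching to the tangent cone at infinity, apply Lemma~\ref{lemm:cone-small-curvature} to conclude the cone is planar, and combine Theorem~\ref{theo:CCZ-non-smooth} with the transversality/covering argument from Proposition~\ref{prop:bernstein-low-dim} before concluding via monotonicity (the stability transfer to the cone is superfluous here, since Lemma~\ref{lemm:cone-small-curvature} needs only the pinching, not stability). One small imprecision: the covering argument forces the number of geometric sheets of $\Sigma$ over $\SS^{n-1}$ to be one, but the integral varifold $V$ may still equal $m[\Sigma]$ for a constant $m\geq 2$, so the density at infinity is $m$ rather than $1$; the correct endgame (as in the paper) is that the density of $V$ at each point of $\Sigma\setminus B_1$ equals the density $m$ at infinity, giving equality in the monotonicity formula and hence that $\supp\|V\|$ is a hyperplane, without claiming multiplicity one.
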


\begin{proof}
We begin by proving that Claim \ref{claim:blowdown-low-dim} from Proposition \ref{prop:bernstein-low-dim} holds in this setting as well. For a sequence $\lambda_{j}\to 0$, we consider $V_{j}:=(\lambda_{j})_{\#} \, V$. Passing to a subsequence, $V_{j}$ converges to a cone $\cC$ in the sense of varifolds. Moreover, the assumed curvature estimates contained in \eqref{eq:curv-est-bernstein} imply that ${\rm spt} \, \|\cC\|\setminus\{0\}$ is smooth and $\Sigma_{j}$ converges smoothly to ${\rm spt} \, \|\cC\|$ (possibly with multiplicity) on compact subsets of $\RR^{n+1}\setminus\{0\}$ (here, we use the fact that the estimate \eqref{eq:curv-est-bernstein} is scale invariant). The curvature estimates pass to the limit, implying that $|A_{{\rm spt} \, \|\cC\|}|(x) |x| < \sqrt{n-1}$ for all $x \in {\rm spt} \,\|\cC\|\setminus\{0\}$. Appealing to Lemma \ref{lemm:cone-small-curvature} below, we find that $\cC$ is a flat hyperplane with some multiplicity $m \in {\mathbb N}$. This establishes Claim \ref{claim:blowdown-low-dim} in this setting (that the multiplicity $m$ is independent of the sequence follows again by monotonicity, as before). 

Thus, any tangent cone at infinity of $V$ is a multiplicity $m$ plane. By Theorem \ref{theo:CCZ-non-smooth}, applied to $V$, $\Sigma$ has exactly one end. Arguing as we did in the proof of Proposition \ref{prop:bernstein-low-dim}, we can use the graphical convergence on compacts sets in $\RR^{n+1}\setminus\{0\}$ (which follows from the curvature estimate (\ref{eq:curv-est-bernstein})) and the fact that $\SS^{n-1}$ does not admit any multiple cover, to obtain that, outside of $B_{1}$, $V$ must agree with the varifold given by $\Sigma$ with multiplicity $m$. Because the density at infinity of $V$ must be $m$, there must be equality in the monotonicity formula starting at any point in $\Sigma$ (which also has density $m$) which easily implies that the support of $V$ is a hyperplane. 
\end{proof}

\begin{lemm}\label{lemm:cone-small-curvature}
Suppose that $\cC$ is a $n$-dimensional minimal cone in $\RR^{n+1}$ that is smooth away from $0$ and satisfies $|A_{\cC}|(x)|x| < \sqrt{n-1}$. Then $\cC$ is a flat hyperplane. 
\end{lemm}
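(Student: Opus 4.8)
The plan is to descend to the link of the cone and invoke Simons' rigidity theorem for minimal hypersurfaces of the round sphere, applied to $\Sigma := \cC \cap \SS^{n}$. First I would check that $\Sigma$ is a smooth, compact, embedded minimal hypersurface of $\SS^{n}$ of dimension $n-1$: since $\cC$ is a cone, $T_{x}\cC$ contains the radial line $\RR x$, so $\cC$ meets $\SS^{n}$ transversally wherever $\cC$ is smooth, i.e.\ at every point of $\SS^{n}\cap\cC$ (as $0\notin\SS^n$); minimality of $\Sigma$ in $\SS^{n}$ is equivalent to minimality of $\cC$ in $\RR^{n+1}$, and $\cC$ is exactly the cone over $\Sigma$.

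Next I would record the relation between the two second fundamental forms. Writing points of $\cC$ as $r\omega$ with $r>0$ and $\omega\in\Sigma$, the unit normal of $\cC$ at $r\omega$ coincides with the unit normal $\nu(\omega)$ of $\Sigma$ in $\SS^{n}$; the radial direction $\partial_{r}$ is a null direction of $A_{\cC}$, and on the directions tangent to $\Sigma$ the shape operator of $\cC$ agrees with that of $\Sigma$ inside $\SS^{n}$. Combined with the fact that $|A_{\cC}|$ is homogeneous of degree $-1$, this yields $|A_{\cC}|(x)\,|x| = |A_{\Sigma}|(x/|x|)$ for every $x\in\cC\setminus\{0\}$, so the hypothesis $|A_{\cC}|(x)|x| < \sqrt{n-1}$ is equivalent to the pointwise pinching $|A_{\Sigma}|^{2} < n-1$ on $\Sigma$.

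The heart of the argument is then the integrated Simons identity. For a minimal hypersurface $\Sigma^{n-1}\subset\SS^{n}$ one has
\[
\tfrac12 \Delta_{\Sigma} |A_{\Sigma}|^{2} = |\nabla A_{\Sigma}|^{2} + |A_{\Sigma}|^{2}\bigl(n-1 - |A_{\Sigma}|^{2}\bigr).
\]
Integrating over the closed manifold $\Sigma$ makes the left side vanish, so $\int_{\Sigma}|\nabla A_{\Sigma}|^{2} + \int_{\Sigma}|A_{\Sigma}|^{2}(n-1-|A_{\Sigma}|^{2}) = 0$; both integrands are nonnegative (the second because $n-1-|A_{\Sigma}|^{2}>0$ pointwise), hence both vanish identically, and since $n-1-|A_{\Sigma}|^{2}>0$ pointwise we conclude $A_{\Sigma}\equiv 0$. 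Thus $\Sigma$ is totally geodesic in $\SS^{n}$, so each connected component is an equatorial $\SS^{n-1}$; since any two equatorial $(n-1)$-spheres in $\SS^{n}$ intersect while the components of the embedded hypersurface $\Sigma$ are disjoint, $\Sigma$ must be a single equatorial sphere, and therefore $\cC$ is a hyperplane. (Alternatively one may quote \cite[Theorem 6.1.1]{Simons} directly, or the variant discussed in \cite[Appendix B]{Simon:GMT} which is valid for cones that are merely immersed and stable away from the vertex.)

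Since every ingredient is classical, I do not expect a genuine obstacle; the only steps requiring care are the identification $|A_{\cC}|(x)|x| = |A_{\Sigma}|(x/|x|)$ and pinning down the constant $n-1$ in Simons' identity --- it equals $\dim\Sigma = n-1$, so that the sharp pinching threshold $\sqrt{n-1}$ matches the hypothesis exactly, with no slack. If $\cC$ is only assumed immersed away from $0$, the argument applies verbatim with $\Sigma$ an immersed minimal hypersurface of $\SS^{n}$ and the immersed form of Simons' rigidity.
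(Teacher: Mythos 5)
Your proof is correct and follows essentially the same route as the paper's: pass to the link $\cC\cap\SS^{n}$, translate the scale-invariant curvature bound into the pointwise pinching $|A|^{2}<n-1$ on the link, and apply Simons' rigidity (the paper simply cites \cite[Corollary 5.3.2]{Simons} where you write out the integrated Simons identity, but that is the content of the citation). The extra details you supply --- the identity $|A_{\cC}|(x)|x|=|A_{\Sigma}|(x/|x|)$ and the observation that two distinct equators must intersect, forcing a connected link --- are accurate and harmless.
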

\begin{proof}
Note that $M : = \cC \cap {\mathbb S}^{n}$ is smooth. By the given curvature estimate, we have that $|A_{M}| < \sqrt{n-1}$. By \cite[Corollary 5.3.2]{Simons}, $M$ must be totally geodesic. This proves the assertion. 
\end{proof}

\begin{rema}\label{rema:transfer}
Observe that the Simons cone $\Sigma$ in $\RR^{8}$ is (strongly) stable and satisfies $|A_{\Sigma}|(x)|x| = \sqrt{n-1}$ for all $x \in \Sigma\setminus B_{1}$. As such, we see that the constant $\sqrt{n-1}-\varepsilon$ in \eqref{eq:curv-est-bernstein} is sharp in the sense that Proposition \ref{prop:bernstein-all-dim} fails with any larger constant. 

The importance of the size of the constant in a (scale invariant) curvature estimate of the form \eqref{eq:curv-est-bernstein} seems to have been first shown by White in \cite{Whi87}. This has been refined in \cite{MPR,CCE,CKM}. A key novelty contained in the present work is the combination of \eqref{eq:curv-est-bernstein} with Lemma \ref{lemm:cone-small-curvature} and with Theorem \ref{theo:CCZ-non-smooth}, allowing flatness to propagate from large to small scales. Furthermore, our work here seems to be the first use of such an estimate in a setting where a priori there could be singularities. 
\end{rema}

\section{Proof of Theorem \ref{theo:weakly-st-low-dim-est}}
\label{proofof1}

Because the hypothesis and conclusion are scale invariant, it suffices to take $R=1$. Assume the theorem is false. Then, there is $\Sigma_{j}$  in $B_{2}\subset \RR^{n+1}$, a sequence of embedded (when $\Sigma$ is immersed and $3\leq n\leq 5$, an identical argument will apply by considering instead rescalings and limits of the immersions) smooth weakly stable hypersurfaces with $|H| \leq H_{0}$ and $\cH^{n}(\Sigma_{j}) \leq \Lambda$, but so 
\[
\sup_{x \in \Sigma_{j}\cap B_{1/2}} |A_{\Sigma_{j}}|(x) \to \infty
\]
as $j\to\infty$. A standard blow-up argument (which we now recall) produces a surface which contradicts Proposition \ref{prop:bernstein-low-dim}. 

Choose $x_{j}\in\Sigma_{j}\cap B_{1/2}$ with $|A_{\Sigma_{j}}|(x_{j})\to\infty$. Without loss of generality, we may assume $x_j \to x_0$. Choose $\rho_{j}\to 0$ sufficiently slowly so that $\rho_{j}|A_{\Sigma_{j}}|(x_{j})\to\infty$. Find $y_{j}\in \Sigma_{j}\cap B_{\rho_{j}}(x_{j})$ maximizing 
\[
y\mapsto |A_{\Sigma_{j}}|(y) d(y,\partial B_{\rho_{j}}(x_{j})).
\]
Set $\sigma_{j} = d(y_{j},\partial B_{\rho_{j}}(x_{j}))$ and $\lambda_{j} = |A_{\Sigma_{j}}|(y_{j})$. Clearly $\sigma_j \leq \rho_j$ and $y_j \to x_0$, so that
\begin{equation}\label{eq:conseq-point-picking}
|A_{\Sigma_{j}}|(y) d(y,\partial B_{\rho_{j}}(x_{j})) \leq \sigma_{j}\lambda_{j} \text{ for } y\in \Sigma_j \cap B_{\sigma_j}(y_j).
\end{equation}
By the choice of $y_j$ we have $\sigma_j \lambda_j \geq \rho_j |A_{\Sigma_{j}}|(x_{j})$, which implies $\lambda_j:=|A_{\Sigma_{j}}|(y_{j}) \to \infty$ and $\lambda_j \sigma_j \to \infty$ as $j\to \infty$. We now define
\[
\tilde \Sigma_{j} = \lambda_{j}(\Sigma_{j}-y_{j}). 
\]
We claim that $\tilde\Sigma_{j}$ has bounded curvature on compact subsets of $\RR^{n+1}$. Indeed, for $x \in \tilde\Sigma_{j} \cap B_{\sigma_j \lambda_j}(0)$, scaling and \eqref{eq:conseq-point-picking} yield
\[
|A_{\tilde \Sigma_{j}}|(x) = \frac{1}{\lambda_{j}} |A_{\Sigma_{j}}|(y_{j} + \lambda_{j}^{-1}x) \leq \frac{\sigma_{j}}{\sigma_{j} - \lambda_{j}^{-1}|x|} \to 1
\]
for $|x|$ uniformly bounded. Note that $\tilde\Sigma_{j}$ has mean curvature $|H_{j}|\leq H_{0}/\lambda_{j} \to 0$. 

The monotonicity formula (see e.g.~\cite{Simon:GMT}) shows that $\cH^{n}(\tilde \Sigma_{j}\cap B_{R}) \leq \tilde \Lambda R^{n}$ for some constant $\tilde \Lambda=\tilde\Lambda(\Lambda,n,H_{0})$ independent of $j$. Then, by higher order elliptic estimates, $\tilde\Sigma_{j}$ converges (up to passing to a subsequence) smoothly (possibly with multiplicity) to a smooth, embedded, complete, weakly stable minimal hypersurface $\tilde\Sigma_{\infty}$ in $\RR^{n+1}$. 

Because $|A_{\tilde\Sigma_{j}}|(0) = 1$ for every $j$, we find that $|A_{\tilde\Sigma_{\infty}}|(0) = 1$, so $\tilde\Sigma_{\infty}$ is non-flat. This contradicts Proposition \ref{prop:bernstein-low-dim} (applied to $\tilde\Sigma_{\infty}$ with multiplicity one).

\section{Proof of Theorem \ref{theo:main-weakly-st-sheeting}}
\label{proofof2}

We begin by describing the setup of the proof of Theorem \ref{theo:main-weakly-st-sheeting}. By scaling we may take $R=10$. We consider a sequence of weakly stable hypersurfaces $\Sigma_{j}$ with mean curvature $|H| \leq H_{0}/10$ and $\cH^{n}(\Sigma_{j}) \leq \Lambda 10^{n}$. We assume that each $\Sigma_{j}$ has a singular set of co-dimension at least $7$ and that $\Sigma_{j}$ satisfies
\[
\Sigma_{j} \subset \{|x^{n+1}| \leq 10/j\} \cap B_{10}(0).
\]
It follows that $\Sigma_{j}$ converges to the flat disk $\{x^{n+1}=0\} \cap B_{10}(0)$  (smoothly away from a point by Theorem~\ref{theo:sheeting-away-from-pt}) with some positive integer multiplicity $k$, in the sense of varifolds. The final aim is to show that the conclusion of Theorem \ref{theo:main-weakly-st-sheeting} is valid for all sufficiently large $j$.

We will first establish the regularity assertion and the curvature estimate in Proposition~\ref{prop:curv-est-sheeting-all-dim} below; the proof of Theorem \ref{theo:main-weakly-st-sheeting} will then be completed at the end of the section. The curvature estimate of Proposition~\ref{prop:curv-est-sheeting-all-dim} will be a consequence of Proposition \ref{prop:bernstein-all-dim} and a blow-up argument. Its scale-breaking nature is reminiscent of the arguments in \cite{CKM}.  
\begin{prop}\label{prop:curv-est-sheeting-all-dim}
Fix $\eta>0$. Then, for $j$ sufficiently large, $\Sigma_{j} \cap B_{9}$ is smooth and there is $z_{j}\in B_{6}$ so that 
\[
|A_{\Sigma_{j}}|(x)|x-z_{j}| \leq \eta
\]
for all $x \in \Sigma_{j}\cap B_{9}$, where $|\cdot|$ stands for the length in $\RR^{n+1}$.
\end{prop}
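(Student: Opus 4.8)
\textbf{Proof proposal for Proposition~\ref{prop:curv-est-sheeting-all-dim}.}

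The plan is to argue by contradiction, running a two-level blow-up: first use the sheeting-away-from-a-point result of \cite{BW:stableCMC} (Theorem~\ref{theo:sheeting-away-from-pt}) to establish smoothness of $\Sigma_j \cap B_9$ for large $j$, then suppose the curvature estimate fails for every choice of center and every $j$, and extract a limit that violates Proposition~\ref{prop:bernstein-all-dim}. More precisely, fix $\eta > 0$. Since $\Sigma_j \to k \, |\{x^{n+1}=0\} \cap B_{10}(0)|$ as varifolds and the $\Sigma_j$ are weakly stable CMC with uniformly small mean curvature and uniform mass bound, Theorem~\ref{theo:sheeting-away-from-pt} applies away from a single bad point: for each $p$ and small $\rho$, on $B_\rho(p)$ minus a neighborhood of the center, $\Sigma_j$ is a union of smooth CMC graphs with small gradient; a covering argument over $B_9$ then shows that for $j$ large, $\Sigma_j \cap B_9$ is smooth (away from at most one point, which a further application near that point rules out). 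This gives the first assertion.

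For the curvature estimate, suppose it fails: after passing to a subsequence, for every $z \in B_6$ there is $x = x(z,j) \in \Sigma_j \cap B_9$ with $|A_{\Sigma_j}|(x)\,|x-z| > \eta$, for all $j$. The idea is to choose centers $z_j$ that nearly optimize the scale-invariant quantity — e.g.\ pick $z_j \in \overline{B_6}$ and a point $y_j \in \Sigma_j \cap B_9$ realizing (up to a factor $2$) $\sup_{z \in \overline{B_6}} \inf_{\{x : |A|(x)|x-z| > \eta\}} (\text{something})$; more cleanly, run a point-selection (as in the proof of Theorem~\ref{theo:weakly-st-low-dim-est}) relative to the ``obstacle'' that the estimate fails, producing a sequence of rescaled surfaces $\widetilde\Sigma_j = \mu_j(\Sigma_j - y_j)$ with $|A_{\widetilde\Sigma_j}|(0) = 1$, with $\mu_j \to \infty$ (so that the mean curvature of $\widetilde\Sigma_j$ tends to $0$ and, by monotonicity, uniform local mass bounds persist), and crucially with the scale-invariant curvature control $|A_{\widetilde\Sigma_j}|(x)\,|x - \widetilde z_j| \leq \sqrt{n-1} - \varepsilon$ on $\widetilde\Sigma_j \setminus B_1(\widetilde z_j)$ inherited from the failure being ``just barely'' at threshold $\eta < \sqrt{n-1}$; here $\widetilde z_j$ is the image of $z_j$. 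The rescaled surfaces have bounded curvature on compact sets away from $\widetilde z_j$, hence (by the compactness/regularity results of \cite{BW:stableCMC}, allowing a codimension-$7$ singular set and touching singularities in the limit) converge to a stationary integral varifold $V$ in $\RR^{n+1}$, weakly stable on its regular part, with $\mathrm{sing}\,V$ contained in a bounded set, satisfying the area growth bound and the curvature estimate \eqref{eq:curv-est-bernstein}. By Proposition~\ref{prop:bernstein-all-dim}, $\mathrm{spt}\,\|V\|$ is a hyperplane — contradicting $|A_{\widetilde\Sigma_\infty}|(0) = 1$, i.e.\ that the limit is non-flat.

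The main obstacle, and where the argument is genuinely ``scale-breaking'' in the sense alluded to after the statement, is the point-selection step: one must choose the rescaling centers $y_j$ and scales $\mu_j$ so that \emph{simultaneously} (i) $|A_{\widetilde\Sigma_j}|(0)=1$, (ii) $\mu_j \to \infty$ so that the CMC surfaces limit onto a \emph{minimal} varifold, and (iii) the non-sharp threshold $\eta$ survives rescaling as the sharp-type bound $\sqrt{n-1}-\varepsilon$ of \eqref{eq:curv-est-bernstein} \emph{with a fixed center} $\widetilde z_j$ staying at bounded distance (this is what lets Proposition~\ref{prop:bernstein-all-dim}, whose hypothesis is curvature decay measured from a single origin, be applied). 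Getting all three at once requires a careful maximization — roughly, among all $(z,x)$ with $x \in \Sigma_j \cap B_9$, maximize over $z \in \overline{B_6}$ the infimum of $|A_{\Sigma_j}|(x)|x-z|$ over the failure set, then blow up at the near-maximizer; one must check the center of the blow-up remains interior and that the ratio of the two competing scales (distance to $z_j$ versus $|A|^{-1}$) is controlled. A secondary technical point is verifying that the limit varifold $V$ indeed satisfies the hypotheses of Proposition~\ref{prop:bernstein-all-dim} — connectedness of $\mathrm{spt}\,\|V\|$ (which may require discarding components or using that the blow-up is centered where curvature is largest), the dimension bound on the singular set (inherited via the compactness theorem of \cite{BW:stableCMC}), and weak stability of the regular part (passing to the limit as in the parenthetical computation in the proof of Proposition~\ref{prop:bernstein-low-dim}, adapted to allow touching singularities and a small singular set).
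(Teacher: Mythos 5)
Your proposal assembles the right ingredients (Theorem~\ref{theo:sheeting-away-from-pt}, Proposition~\ref{prop:bernstein-all-dim}, a blow-up and contradiction), and you correctly flag where the difficulty lies, but the step you flag is precisely the one you do not supply, and it is the heart of the proof. A point-selection of the type used in the proof of Theorem~\ref{theo:weakly-st-low-dim-est} (choose $y_j$ near-maximizing a curvature-times-distance quantity, rescale so $|A|(0)=1$) produces curvature bounds on balls around $y_j$; it does not produce the hypothesis of Proposition~\ref{prop:bernstein-all-dim}, which is \emph{radial} decay $|A_{\Sigma}|(x)|x|\le\sqrt{n-1}-\varepsilon$ measured from a single center on the \emph{entire} exterior of a unit ball. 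Your phrase that this bound is ``inherited from the failure being just barely at threshold'' is not an argument: the failure set $\{x:|A_{\Sigma_j}|(x)|x-z|>\eta\}$ could a priori be spread over many scales, and no single rescaling converts its nonemptiness into the required exterior decay estimate. The paper's mechanism is different and genuinely scale-breaking: define $\delta(\Sigma_j,z)$ as the infimal radius $r$ such that $\Sigma_j\setminus\overline{B_r(z)}$ is smooth and satisfies $|A_{\Sigma_j}|(x)|x-z|\le\eta$ there, minimize over $z\in B_6$ to get the \emph{smallest bad ball} $B_{\delta_j}(z_j)$, and rescale so that this ball becomes $B_1(0)$. The exterior estimate then holds \emph{by construction} of $\delta_j$, Proposition~\ref{prop:bernstein-all-dim} applies to the limit, and --- crucially --- Theorem~\ref{theo:sheeting-away-from-pt} is applied a \emph{second time, at the rescaled scale}, to localize the remaining failure to a single point $\tilde z$ with $|\tilde z|\le 1$; a further rescaling to the scale of a witness point $w_j$ (where the estimate fails for the recentered ball) then yields the contradiction that forces $\delta_j=0$. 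Without this iteration of the sheeting theorem across scales (the content of Remark~\ref{rema:transfer}), the contradiction does not close.

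A secondary error: you establish smoothness of $\Sigma_j\cap B_9$ first, by ``a further application'' of the sheeting theorem near the bad point. This cannot work --- the second alternative of Theorem~\ref{theo:sheeting-away-from-pt} gives no information at the bad point itself, no matter how many times you apply it there. In the paper the logical order is reversed: one first proves the curvature estimate away from $z_j$ (with $\delta_j=0$), and only then deduces smoothness \emph{at} $z_j$, using strong stability on a punctured ball, the fact that the curvature estimate together with Lemma~\ref{lemm:cone-small-curvature} forces every tangent cone at $z_j$ to be a plane, and the regularity theorems of \cite{BW:stableCMC}.
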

We briefly explain the idea of the proof. The conclusion is non-trivial only when we are in the second alternative of the partial sheeting result from \cite{BW:stableCMC} that is recalled in Theorem \ref{theo:sheeting-away-from-pt}, Appendix \ref{appendixBW}. This second alternative gives that, away from a point, $\Sigma_{j}$ is converging smoothly (with sheeting) to a hyperplane with multiplicity. Thus, there is some $y$ and $\delta>0$ small so that the conclusion holds outside of $B_{\delta}(y)$. 

The strategy of the proof is to pick the \emph{smallest} ball $B_{\delta_{j}}(z_{j})$ so that the conclusion for $\Sigma_{j}$ holds outside of the ball. The claim will follow if we can prove that actually $\delta_{j}=0$, so we will assume that $\delta_{j}>0$. Rescale $\Sigma_{j}$ to $\hat\Sigma_{j}$ so that the ball $B_{\delta_{j}}(z_{j})$ becomes $B_{1}(0)$ (outside of which, the smoothness and \emph{scale invariant} curvature estimates hold). We can pass $\hat\Sigma_{j}$ to the limit, which inherits the curvature estimates (and smoothness) outside of $B_{1}(0)$. By Proposition \ref{prop:bernstein-all-dim}, the limit is a union of hyperplanes (note that here we have transferred the flatness estimates contained in the partial sheeting result to the smaller scale, as pointed out in Remark \ref{rema:transfer}). 
Now, the partial sheeting result (applied to $\hat\Sigma_{j}$) implies, as above, that the convergence of $\hat\Sigma_{j}$ to the limit occurs smoothly away from a single point. This contradicts our choice of $B_{\delta_{j}}(z_{j})$, since for $j$ large, we could take a smaller ball around the point where sheeting fails in the rescaled picture. This will contradict the assumption that $\delta_{j}>0$, and will complete the proof. 
\begin{proof}[Proof of Proposition \ref{prop:curv-est-sheeting-all-dim}]Clearly, it suffices to assume that $\eta < \sqrt{n-1}$. If the the first case of the conclusion of Theorem~\ref{theo:sheeting-away-from-pt} holds for every $\Sigma_{j}$ large enough, then the curvature estimate is true with $z_j=0$ (and the conclusion of Theorem \ref{theo:main-weakly-st-sheeting} is valid, so there is nothing further to prove). So we may assume (by the second case of the conclusion of Theorem~\ref{theo:sheeting-away-from-pt}) that there is a point $y \in B_{5}(0) \cap \{x^{n+1} = 0\}$ such that $\Sigma_{j}$ are sheeting away from $y$, i.e. for any $r>0$, $\Sigma_{j}\cap (B_{9}(0) \setminus B_{r}(y) )$ is smooth for $j$ sufficiently large and
\begin{equation}\label{eq:curv-away-from-pt}
\sup_{x\in\Sigma_{j}\cap (B_{9}(0) \setminus B_{r}(y) )} |A_{\Sigma_{j}}|(x) \to 0
\end{equation}
as $j\to\infty$. We will subsequently replace $\Sigma_{j}$ by $\Sigma_{j}\cap B_{9}(0)$ (to avoid any irrelevant issues with the behavior of $\Sigma_{j}$ near its boundary). 

For $z \in B_{6}(0)$, we define
\[
\delta(\Sigma_{j},z) : = \inf\left\{ r>0 : \begin{aligned} \Sigma_{j}^{r}:= \Sigma_{j}\setminus \overline{B_{r}(z)} \text{ is smooth \qquad} \\\text{and } |A_{\Sigma_{j}}|(x)|x-z| \leq \eta \text{ for all }x\in  \Sigma_{j}^{r} \end{aligned}  \  \right\}.
\]
Note that $\delta(\Sigma_{j},y) \to 0$ as $j\to \infty$, by the partial sheeting result discussed above. 

For every $j$ set $\delta_{j} : = \inf_{z \in B_{6}(0)} \delta(\Sigma_{j},z)$ and choose $z_{j,k}$ with $\delta(\Sigma_{j},z_{j,k}) \to \delta_{j}$ as $k\to\infty$. Passing to a subsequence, we may assume that $z_{j,k}\to z_{j} \in B_{6}(0).$ We claim that $\delta(\Sigma_{j},z_{j}) = \delta_{j}$. If not, there is $\epsilon > 0$ and $w \in \Sigma_{j} \setminus B_{\delta_{j}+2\epsilon}(z_{j})$ with either (i) $w \in \sing \Sigma_{j}$ or (ii) $|A_{\Sigma_{j}}|(w) |w-z_{j}| > \eta + 2\epsilon$. Note that $w \in \Sigma_{j}\setminus B_{\delta_{j} + \epsilon}(z_{j,k})$ for $k$ sufficiently large. Thus, in case (i), we find that, by the definition of $\delta(\cdot,\cdot)$, $\delta(\Sigma_{j},z_{j,k}) \geq \delta_{j} + \epsilon$ for all $k$ sufficiently large. This contradicts the choice of $z_{j,k}$. Similarly, in case (ii) we have that
\[
|A_{\Sigma_{j}}|(w)|w-z_{j,k}| > \eta + \epsilon,
\]
for $k$ sufficiently large, since $|w-z_{j,k}|\to |w-z_{j}|$ as $k\to\infty$. Again, this yields a contradiction, as before. 

Thus, we have arranged that $z_{j}$ minimizes $\delta(\Sigma_{j},\cdot)$. Since $\delta(\Sigma_{j}, y)  \to 0$, we also have that $\delta_{j} \to 0$ and consequently, it follows from the definition of $\delta_{j}$ and 
(\ref{eq:curv-away-from-pt}) that $z_{j} \to y.$ We claim that $\delta_{j} =0$ for all sufficiently large $j$. Arguing by contradiction, we assume (upon extracting a subsequence that we do not relabel) that $\delta_{j}>0$ for all $j$. Using this, we now perform the relevant blow-up argument. Define 
\[
\tilde\Sigma_{j} = \delta_{j}^{-1}(\Sigma_{j}-z_{j}).
\]
Note that as in the proof of Theorem \ref{theo:weakly-st-low-dim-est}, the monotonicity formula implies that $\cH^{n}(\tilde \Sigma_{j} \cap B_{R}(0)) \leq \tilde \Lambda R^{n}$ for some $\tilde \Lambda=\tilde \Lambda(\Lambda, n, H_0)$. Moreover, the choice of $\delta_{j}$ implies that $\tilde\Sigma_{j}\setminus B_{1}$ is smooth and satisfies
\begin{equation}\label{eq:sheeting-thm-proof-curv-est}
|A_{\tilde\Sigma_{j}}|(x)|x| \leq \eta
\end{equation}
for $x \in\tilde\Sigma_{j}\setminus B_{1}$. Note also that $|H_{\tilde{\Sigma}_j}|\leq \delta_j \frac{H_0}{10} \to 0$. The area bounds and weak stability imply, by the regularity and compactness theorems in \cite{BW:stableCMC} (recalled in Theorem~\ref{theo:BWregcomp}, Appendix \ref{appendixBW} below), that $\tilde \Sigma_{j}$ converge in the varifold sense to $\tilde V$, which is stationary, weakly stable, has smoothly embedded support outside of a co-dimension $7$ singular set, and satisfies $\Vert \tilde V\Vert(B_{R}(0)) \leq \tilde\Lambda R^{n}$. Furthermore, by the curvature estimates (\ref{eq:sheeting-thm-proof-curv-est}), the support of $\tilde V$ is a smooth hypersurface $\tilde \Sigma_{\infty}$ outside of $B_{1}(0)$ satisfying 
\[
|A_{\tilde\Sigma_{\infty}}|(x)|x| \leq \eta
\]
and the convergence is smooth on compact sets outside $B_{1}(0)$.
Thus, by Proposition \ref{prop:bernstein-all-dim}, each connected component of the support of $\tilde V$ is a hyperplane and so the support of $\tilde \|V\|$ is made up of finitely many parallel hyperplanes. 

Now, we again appeal to Theorem \ref{theo:sheeting-away-from-pt} to conclude that the convergence of $\tilde \Sigma_{j}$ to $\tilde V$ occurs smoothly (possibly with multiplicity) away from some fixed point $\tilde z \in 
\supp \|\tilde V\|$ (if the sheeting actually occurs everywhere, we simply set $\tilde z =0$). Note that the curvature estimates \eqref{eq:sheeting-thm-proof-curv-est} imply that $|\tilde z| \leq 1$.

Define $\hat z_{j} = z_{j} + \delta_{j}\tilde z$  and let $\hat\delta_{j} : = \delta(\Sigma_{j},\hat z_{j})$.  Since $\hat\delta_{j} \geq \delta_{j}/2$ (by the definition of $\delta_{j}$), there is $w_{j}\in\Sigma_{j} \setminus B_{\delta_{j}/2}(\hat z_{j})$ so that either (i) $w_{j}\in\sing \Sigma_{j}$, or (ii) $|A_{\Sigma_{j}}|(w_{j})|w_{j} - \hat z_{j}| > \eta$. We note that in either case we have that 
\begin{equation}\label{eq:sheeting-pf-scale-bettter-pt}
\liminf_{j\to\infty} \delta_{j}^{-1} |w_{j} - \hat z_{j}| = \infty. 
\end{equation}
(For if not, then defining $\tilde w_{j} = \delta_{j}^{-1}(w_{j}- z_{j})$, we find that, in the scale of $\tilde\Sigma_{j}$ discussed above,
\[
|\tilde z- \tilde w_{j}| = \delta_{j}^{-1} |(\hat z_{j} - z_{j})-(w_{j}-z_{j})| = \delta_{j}^{-1}|w_{j} - \hat z_{j}|
\]
is bounded above (after passing to a subsequence) and bounded below by $\frac 12$ 
(since $w_{j}\not \in B_{\delta_{j}/2}(\hat z_{j})$), but because $\tilde\Sigma_{j}$ sheets away from $\tilde z$, we find that either (i) or (ii) would be a contradiction.) 
Finally,  we define
\[
\check \Sigma_{j} := | w_{j}- \hat z_{j}|^{-1}(\Sigma_{j}-\hat z_{j})
\]
and set
\[
\check w_{j} : =  |w_{j}-\hat z_{j}|^{-1}(w_{j}-\hat z_{j}), \qquad \check z_{j} : = | w_{j} -\hat z_{j}|^{-1}(z_{j}-\hat z_{j}) = - \delta_{j} |w_{j} - \hat z_{j}|^{-1} \tilde z.
\]
Note that it follows from (\ref{eq:curv-away-from-pt}), (i) and (ii) that $w_{j} \to y$ and hence (since $z_{j} \to y$)  that $|w_{j} - \hat z_{j}| \to 0.$ We have already shown that outside of $B_{\delta_{j}}(z_{j})$, $\Sigma_{j}$ is smooth and satisfies \eqref{eq:sheeting-thm-proof-curv-est}. This implies that $\check\Sigma_{j}$ is smooth outside of $B_{\delta_{j}|w_{j}-\hat z_{j}|^{-1}}(\check z_{j})$ and additionally satisfies
\[
|A_{\check\Sigma_{j}}|(x) |x-\check z_{j}| \leq \eta.
\]
By \eqref{eq:sheeting-pf-scale-bettter-pt}, and recalling that $|\tilde z| \leq 1$, we have that $\delta_{j}|\hat z_{j}-w_{j}|^{-1} \to 0$ and $\check z_{j}\to 0$. As before, we may take the varifold limit $\check V$ of $\check \Sigma_{j}$, and the curvature estimates we have just established show that this limit occurs smoothly (possibly with multiplicity) outside of $B_{1/2}(0)$. The curvature estimates pass to the limit so by Proposition~\ref{prop:bernstein-all-dim} 
each connected component of ${\rm spt} \, \|\check V\|$ is a hyperplane.  Thus, since $|\check w_{j}|=1$, we find that $\check w_{j}\not \in \sing\check \Sigma_{j}$ and $|A_{\check\Sigma_{j}}|(\check w_{j})\to 0$, so
\[
|A_{\check\Sigma_{j}}|(\check w_{j})|\check w_{j}|\to 0.
\]
Rescaling to the original scale, this contradicts both (i) and (ii) above (concerning $w_{j}$).  This contradiction establishes that $\delta_{j} = 0$ for $j$ sufficiently large. 

We have now shown that $\Sigma_{j}$ is smooth away from $z_{j}$ and satisfies 
\[
|A_{\Sigma_{j}}|(x) |x-z_{j}|\leq \eta
\]
for all $x \in \Sigma_{j}\setminus \{z_{j}\}$. If $z_{j} \not \in \Sigma_{j}$ there is nothing further to show. Else, arguing as in the beginning of the proof of Proposition~\ref{prop:bernstein-low-dim}, we see that for fixed $j$ and any small ball $B_{\rho}(z_{j})$,  the hypersurface $\Sigma_{j}$ is strongly stable either in $B_{\rho}(z_{j})$ or in $B_{9} \setminus \overline{B_{\rho}(z_{j})};$ it follows from this fact that for each fixed $j$, there is $\rho>0$ such that $\Sigma_{j}$ is strongly stable in the annulus $B_{\rho}(z_{j}) \setminus \{z_{j}\}$, and hence in the ball $B_{\rho}(z_{j})$. Moreover, by the curvature estimates and Lemma \ref{lemm:cone-small-curvature}, any tangent cone $\Sigma_{j}$ at $z_{j}$ must be supported on a hyperplane. Thus, $\Sigma_{j}$ is smooth at $z_{j}$ by \cite[Theorems 3.1 and 3.3]{BW:stableCMC}. This completes the proof of Proposition \ref{prop:curv-est-sheeting-all-dim}. 
\end{proof}

\begin{proof}[Completion of the proof of Theorem \ref{theo:main-weakly-st-sheeting}]
As discussed in the beginning of this section, $\Sigma_{j}$ converge in the sense of varifolds to the hyperplane $\{x^{n+1}=0\}$ with multiplicity $k$. We claim that the curvature of $\Sigma_{j}$ is uniformly bounded on $\Sigma_{j}\cap B_{9}(0)$. Let $\lambda_{j} = \max_{\Sigma_j \cap B_9(0)} |A_{\Sigma_j}|$ and assume for a contradiction (upon extracting a subsequence that we do not relabel) that $\lambda_{j} \to \infty$ as $j \to \infty$. By applying Proposition \ref{prop:curv-est-sheeting-all-dim} iteratively we may find a further subsequence (not relabeled) so that $\Sigma_j$ is a smooth immersion in the whole ball $B_9(0)$ and there is $z_{j}\in B_{6}(0)$ and $\eta_{j}\to 0$ so that $\Sigma_{j}$ satisfies the curvature estimates 
\begin{equation}\label{eq:final-curv-est-sheeting-all-dim}
|A_{\Sigma_{j}}|(x)|x-z_{j}| \leq \eta_{j}
\end{equation} 
for all $x\in B_{9}(0)$. Note that since $z_j \in B_6(0)$, it follows from (\ref{eq:final-curv-est-sheeting-all-dim}) that $|A_{\Sigma_{j}}|$ is uniformly bounded in the annulus $B_9(0)\setminus \overline{B}_8(0)$ and therefore 
the maximum of $|A_{\Sigma_j}|$ in $\Sigma_{j} \cap B_{9}(0)$ is achieved at a point $y_j\in B_8(0)$. We set
\[
\tilde\Sigma_{j} = \lambda_{j}(\Sigma_{j}-y_{j}).
\]
By construction we have $|A_{\tilde\Sigma_{j}}|(0) = 1$ and that $|A_{\tilde\Sigma_{j}}|$ is uniformly bounded on compact subsets of ${\mathbb R}^{n+1}$, so $\tilde\Sigma_{j}$ converges smoothly on compact subsets of $\RR^{n+1}$ to a non-flat smooth hypersurface $\tilde\Sigma_{\infty}$. On the other hand, the estimate \eqref{eq:final-curv-est-sheeting-all-dim} is scale invariant, so for $\tilde z_{j} = \lambda_{j}(z_{j}-y_{j})$, we see that
\[
|A_{\tilde\Sigma_{j}}|(x) |x-\tilde z_{j}| \leq \eta_{j}.
\]
Considering $x=0$ here, we find that $\tilde z_{j}\to 0$, since $\eta_{j}\to 0$. Hence, passing this inequality to the limit, we find that
\[
|A_{\tilde\Sigma_{\infty}}|(x) |x| = 0,
\]
contrary to the fact that $\tilde\Sigma_{\infty}$ is non-flat. 

This implies that the curvature of $\Sigma_{j}$ (in the original scale, for the original sequence) was uniformly bounded in $B_8(0)$. Since $\Sigma_j$ converges to a hyperplane, the uniform curvature bounds and standard elliptic estimates conclude the proof.
\end{proof}

\begin{rema}
It is possible to conclude in a slightly different manner, by using the curvature estimates from Proposition \ref{prop:curv-est-sheeting-all-dim} with $\eta < 1$ to prove that the function $f_{j}(x) : = |x-z_{j}|^{2}$ is strictly convex (for any $j$ large enough). From this, if we assume sheeting of $\Sigma_j$ away from a point $y$ (second alternative of Theorem \ref{theo:sheeting-away-from-pt}), it is not hard to argue (using a max-min argument) that distinct sheets of $\Sigma_{j} \cap \left(B_9(0) \setminus B_{1/2}(y)\right)$ cannot be connected in $B_{1/2}(y)$ (endowing $\Sigma_j$ with the topology of the immersion, not of the embedding); therefore each connected component of $\Sigma_j$ in $B_9(0)$ contains exactly one sheet of $\Sigma_{j} \cap \left(B_9(0) \setminus B_{1/2}(y)\right)$. Theorem \ref{theo:main-weakly-st-sheeting} then follows from Allard's regularity theorem applied to each connected component individually, since standard arguments show that each component converges to the hyperplane with multiplicity $1$ in the sense of varifolds. This alternative argument seems to be necessary for the applications to bounded index surfaces mentioned in Section \ref{boundedindexminimal} (cf.\ \cite{CKM}).
\end{rema}

\appendix
\section{Weakly stable minimal hypersurfaces have only one end}
\label{appendixCCZ}

In this appendix we review a result of Cheng--Cheung--Zhou \cite{ChengCheungZhou} for weakly stable complete, non-compact minimal hypersurfaces immersed in ${\mathbb R}^{n+1}$ that generalized earlier work of Cao--Shen--Zhu \cite{CaoShenZhu} establishing the same result for strongly stable complete, non-compact minimal hypersurfaces. We include their proof here, since for our purposes we need to extend (as we do below) the argument to the case where the hypersurfaces are allowed to have a small singular set. We recall that we write $\nabla$ to denote the intrinsic gradient on a hypersurface, and will specify $\nabla^{\RR^{n+1}}$ if we refer to the ambient gradient.

\begin{theo}[{\cite[Theorem 3.2]{ChengCheungZhou}}]\label{theo:CCZ-smooth}
A complete connected oriented weakly stable minimal hypersurface immersed in $\RR^{n+1}$, $n\geq 2$, has only one end. 
\end{theo}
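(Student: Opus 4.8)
The plan is to prove that a complete, connected, oriented, weakly stable minimal hypersurface $\Sigma$ immersed in $\RR^{n+1}$ cannot have two or more ends, arguing by contradiction. Suppose $\Sigma$ had at least two ends. The key point is that weak stability, although it only gives the stability inequality for test functions of mean zero, is essentially as good as full stability when one can build a mean-zero function out of pieces supported on disjoint ends. So first I would show that $\Sigma$ is in fact \emph{strongly} stable. If $\Sigma$ were not strongly stable there would be some $\varphi_0 \in C^\infty_c(\Sigma)$ with $\int_\Sigma |\nabla \varphi_0|^2 < \int_\Sigma |A_\Sigma|^2 \varphi_0^2$; supporting this (after translating, since the competitor need not vanish anywhere) on one end, and placing a compactly supported bump $\varphi_1$ on a \emph{different} end, one can choose $t \in \RR$ so that $\varphi_0 + t\varphi_1$ has mean zero — and since the supports are disjoint, the quadratic form still has the wrong sign on $\varphi_0 + t\varphi_1$, contradicting weak stability. (Here one uses that having $\geq 2$ ends means there are genuinely disjoint noncompact pieces carrying test functions — this is exactly the argument used in the proof of Proposition~\ref{prop:bernstein-low-dim}.) Actually a cleaner route avoids the dichotomy entirely: assuming two ends, take two disjoint bump functions $\varphi_1, \varphi_2$, one on each end; if neither piece is individually destabilising then each end is itself a stable minimal hypersurface-with-boundary, and one then runs the Cao--Shen--Zhu/Cheng--Cheung--Zhou argument directly.

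The substantive part is the one-end statement for \emph{strongly} stable minimal hypersurfaces, following Cheng--Cheung--Zhou (and Cao--Shen--Zhu). The idea is a Liouville-type / harmonic-function argument. If $\Sigma$ has (at least) two ends, one can construct a nonconstant bounded harmonic function $u$ on $\Sigma$ with finite Dirichlet energy: fix a large compact piece, set $u$ near $1$ on (a neighborhood of) one end and near $0$ on another, and solve the Dirichlet problem on exhausting domains; nonparabolicity of $\Sigma$ (which follows here from Euclidean volume growth together with an appropriate Sobolev or isoperimetric inequality on stable minimal hypersurfaces, or from the fact that $\Sigma$ carries a positive Green's function) ensures the limit $u$ is nonconstant with $\int_\Sigma |\nabla u|^2 < \infty$. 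Then one inserts cleverly chosen multiples of $u$ (cut off by a logarithmic cutoff in the distance function to make things compactly supported) into the stability inequality $\int_\Sigma |A_\Sigma|^2\phi^2 \le \int_\Sigma |\nabla\phi|^2$, and combines it with the Bochner/Simons identity for $|\nabla u|$ or with the refined Kato inequality $|\nabla|\nabla u||^2 \le \tfrac{n}{n-1}|\nabla^2 u|^2$ on the (harmonic, hence $\Delta u = 0$) function. The resulting differential inequality forces $|\nabla u| \equiv 0$ when $n \le $ the relevant dimension, or more precisely the combination of the stability inequality with the Bochner formula and the Sobolev inequality yields $\nabla u \equiv 0$ in all dimensions $n \ge 2$ under the strong stability assumption — contradicting that $u$ is nonconstant. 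This is precisely the mechanism of \cite{ChengCheungZhou, CaoShenZhu}.

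Concretely the steps I would carry out are: (1) reduce to showing there is no nonconstant bounded finite-energy harmonic function — equivalently, reduce the two-ends hypothesis to the existence of such a $u$; (2) establish nonparabolicity of $\Sigma$ from Euclidean-type volume bounds via the Michael--Simon Sobolev inequality, giving a Green's function and hence, when there are $\geq 2$ ends, a nonconstant bounded $u$ with $\int_\Sigma|\nabla u|^2<\infty$; (3) test the stability inequality against $\phi = |\nabla u|\,\eta$ with $\eta$ a logarithmic cutoff, expand using $\Delta u = 0$, the Bochner formula $\tfrac12\Delta|\nabla u|^2 = |\nabla^2 u|^2 + \Ric_\Sigma(\nabla u,\nabla u)$ together with the Gauss equation $\Ric_\Sigma(\nabla u,\nabla u) \ge -|A_\Sigma|^2|\nabla u|^2$ (using minimality), and the refined Kato inequality; (4) let the cutoff exhaust $\Sigma$, using $\int_\Sigma|\nabla u|^2<\infty$ to kill the cutoff error terms, and conclude $|\nabla^2 u| \equiv 0$ and $|\nabla u|\equiv 0$, so $u$ is constant — a contradiction. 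The main obstacle I anticipate is step (3)--(4): getting the coefficients in the Bochner/Kato/stability combination to close (the ``$\tfrac{n}{n-1}$'' from refined Kato is exactly what makes the sign work for stable minimal hypersurfaces), and carefully controlling the logarithmic-cutoff error terms so that they vanish in the limit using only the finiteness of the Dirichlet energy — this is the delicate analytic heart of the Cao--Shen--Zhu/Cheng--Cheung--Zhou argument that we are importing.
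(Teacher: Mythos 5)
Your overall architecture (two ends $\Rightarrow$ a nonconstant bounded finite-energy harmonic function $u$, then stability $+$ Bochner $+$ Gauss $+$ refined Kato applied to $|\nabla u|$ times a cutoff) matches the paper's, but the step where you dispose of the weak-versus-strong stability issue is where the real difficulty of this theorem lives, and your treatment of it has a genuine gap. Writing $Q(\phi)=\int_\Sigma\bigl(|\nabla\phi|^2-|A_\Sigma|^2\phi^2\bigr)$, you claim that weak stability plus two ends upgrades to \emph{strong} stability: given a destabilising $\varphi_0$ you add $t\varphi_1$ with $\varphi_1$ a bump supported on a disjoint far region and $t$ chosen so that $\int_\Sigma(\varphi_0+t\varphi_1)=0$, and assert that ``since the supports are disjoint, the quadratic form still has the wrong sign.'' It need not: by disjointness $Q(\varphi_0+t\varphi_1)=Q(\varphi_0)+t^2Q(\varphi_1)$, the value $t=-\int\varphi_0/\int\varphi_1$ is forced by the mean-zero constraint, and nothing prevents $t^2Q(\varphi_1)\geq|Q(\varphi_0)|$. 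To close this you would need $\varphi_1$ with $\int|\nabla\varphi_1|^2/\bigl(\int\varphi_1\bigr)^2$ arbitrarily small, which requires quantitative volume control on annuli that is not among the hypotheses and that you do not supply. The disjoint-support trick legitimately yields only that $\Sigma$ is strongly stable \emph{outside a compact set} --- this is exactly what the paper extracts from it in Proposition~\ref{prop:bernstein-low-dim}, and there it needs \emph{both} test functions to be destabilising --- and that weaker conclusion does not allow you to insert $\eta|\nabla u|$ with $\eta\equiv1$ on a large ball into the stability inequality, since the support then meets the possibly unstable core. Your alternative ``cleaner route'' (``if neither piece is individually destabilising then each end is itself stable'') is likewise not a valid inference: one non-destabilising bump does not make an entire end stable.

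The paper (following Cheng--Cheung--Zhou) never upgrades to strong stability. It keeps the mean-zero constraint and engineers the admissible test function directly: it first shows $\int_\Sigma|\nabla u|=\infty$ (via the divergence theorem and the coarea formula), then builds a one-parameter family $\varphi_t$ of radial cutoffs, equal to $1$ on $B_a^\Sigma(p)$, with $|\nabla\varphi_t|\leq R^{-1}$, and with a plateau at the \emph{negative} value $-t$ on a far annulus of width $b$ chosen so large that $t\mapsto\int_\Sigma\varphi_t|\nabla u|$ changes sign; an intermediate-value argument then produces $t$ with $\int_\Sigma\varphi_t|\nabla u|=0$, so $\varphi_t|\nabla u|$ is admissible for \emph{weak} stability, and the Bochner--Kato computation you describe then runs with $\varphi=\varphi_t$. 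This mean-zero engineering is precisely what separates the weakly stable case from Cao--Shen--Zhu's strongly stable one, and it is the piece your proposal omits. Two smaller points: the case $n=2$ is handled separately via da Silveira's theorem (the harmonic-function construction is used only for $n\geq3$), and the endgame concludes that $|\nabla u|$ is constant and then vanishes using $|\nabla u|\in L^2$ together with the infinite volume of $\Sigma$.
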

\begin{proof}
In $\RR^{3}$, a complete oriented weakly stable minimal surface is a plane, by \cite{DaSilveira}, so we need only consider $n\geq 3$. 

Suppose that $\Sigma$ is a complete oriented weakly stable {minimal} hypersurface in $\RR^{n+1}$ with at least two ends. By \cite[Lemma 2]{CaoShenZhu}, there exists a non-constant bounded harmonic function $u$ with finite Dirichlet energy, $\int_{\Sigma} |\nabla u|^{2} < \infty$. 

Consider $\varphi \in C^{1}_{0}(\Sigma)$ so that $\int_{\Sigma} \varphi|\nabla u| = 0$ (we will choose a specific $\varphi$ below). Plugging $\varphi|\nabla u|$ into the stability inequality for $\Sigma$ yields 
\begin{align*}
\int_{\Sigma} |A|^{2} \varphi^{2} |\nabla u|^{2} & \leq \int_{\Sigma} |\nabla(\varphi|\nabla u|)|^{2}\\
& = \int_{\Sigma} |\nabla |\nabla u||^{2}\varphi^{2} + \frac 12 \nabla |\nabla u|^{2} \cdot \nabla \varphi^{2} + |\nabla u|^{2}|\nabla \varphi|^{2}\\
& = \int_{\Sigma} |\nabla |\nabla u||^{2}\varphi^{2} - \frac 12 \Delta |\nabla u|^{2}  \varphi^{2} + |\nabla u|^{2}|\nabla \varphi|^{2}\\
& = \int_{\Sigma} ( |\nabla |\nabla u||^{2} -|D^{2}u|^{2}) \varphi^{2} - \Ric_{\Sigma}(\nabla u,\nabla u)\varphi^{2}+ |\nabla u|^{2}|\nabla \varphi|^{2},
\end{align*}
where we have integrated by parts and used the Bochner formula on $\Sigma$
\[
\frac 12 \Delta|\nabla u|^{2} = |D^{2}u|^{2} +\Ric_{\Sigma}(\nabla u,\nabla u).
\]
The Gauss equations (and minimality of $\Sigma$) imply that
\[
\Ric_{\Sigma}(\nabla u,\nabla u) = -|A(\nabla u,\cdot)|^{2} \geq - |A|^{2}|\nabla u|^{2}.
\]
Moreover, because $u$ is harmonic, we have the improved Kato inequality
\[
|D^{2}u|^{2} - |\nabla |\nabla u||^{2} \geq \frac{1}{n-1}|\nabla |\nabla u||^{2}.
\]
Combined with the stability inequality as above, this yields
\[
\int_{\Sigma} |\nabla|\nabla u||^{2} \varphi^{2} \leq (n-1) \int_{\Sigma} |\nabla u|^{2} |\nabla \varphi|^{2}.
\]

We now choose $\varphi$ appropriately. First, we argue that $\int_{\Sigma} |\nabla u| = \infty$. Let $p\in \Sigma$ such that $|\nabla \, u|(p) >0$. Write $B^{\Sigma}_{r}(p)$ for the intrinsic ball of radius $r$ around $p$ in $\Sigma$. For almost every $R\geq 1$, we have
\[
0 < \Vert \nabla u \Vert_{L^{2}(B_{1}^{\Sigma}(p))}^{2} \leq \Vert \nabla u \Vert_{L^{2}(B_{R}^{\Sigma}(p))}^{2} = \int_{B_{R}^{\Sigma}(p)} |\nabla u|^{2} = \int_{\partial B_{R}^{\Sigma}(p)} u \frac{\partial u}{\partial \nu} \leq \Vert u \Vert_{L^{\infty}} \int_{\partial B_{R}^{\Sigma}(p)} |\nabla u|,
\]
so because $u$ is bounded, there is some constant $C=C(u,p)>0$ so that 
\[
\int_{\partial B_{R}^{\Sigma}(p)} |\nabla u| \geq C > 0. 
\]
Hence, by the co-area formula
\[
\int_{B_{R}^{\Sigma}(p)\setminus B_{1}^{\Sigma}(p)} |\nabla u| \geq \int_{1}^{R} \int_{\partial B_{r}^{\Sigma}(p)} |\nabla u| \geq C(R-1),
\]
which tends to infinity as $R\to\infty$. 

For $a,b,R$ to be chosen with $R>a>0$, we define (for $t\in[0,1]$)
\[
\varphi_{t}(x) : = \begin{cases}
1 & d_{\Sigma}(x,p) < a\\
(a+R-d_{\Sigma}(x,p))/R & a \leq d_{\Sigma}(x,p) < a + R\\
t(a+R-d_{\Sigma}(x,p))/R & a + R \leq d_{\Sigma}(x,p) < a+2R\\
-t & a+2R \leq d_{\Sigma}(x,p) < a+2R+b\\
t(d_{\Sigma}(x,p)-(a+3R+b))/R & a+2R+b\leq d_{\Sigma}(x,p) < a+3R+b\\
0 & d_{\Sigma}(x,p) \geq a+3R+b.
\end{cases}
\]
For $\epsilon>0$ fixed, choose $R$ so that $(n-1)R^{-2}\int_{\Sigma} |\nabla u|^{2} < \epsilon $. Then,
\[
\int_{\Sigma} \varphi_{0} |\nabla u| \geq \int_{B_{a}^{\Sigma}(p)}|\nabla u| > 0
\]
and
\[
\int_{\Sigma} \varphi_{1} |\nabla u| \leq \int_{B_{a+R}^{\Sigma}(p)} |\nabla u| - \int_{B_{a+2R+b}^{\Sigma}(p) \setminus B_{a+2R}}|\nabla u|.
\]
Since we have seen that $\int_{\Sigma} |\nabla u| = \infty$, we may take $b=b(a,\epsilon)$ sufficiently large so that 
\[
\int_{\Sigma} \varphi_{1} |\nabla u|  < 0.
\]
Thus, there is some $t = t(b,\epsilon) \in (0,1)$ so that 
\[
\int_{\Sigma} \varphi_{t} |\nabla u| = 0.
\]

Choosing $\varphi_{t}$ in the above computation, we note that $|\nabla \varphi_{t}| \leq R^{-1}$, so
\[
\int_{\Sigma} |\nabla |\nabla u||^{2} \varphi_{t}^{2} \leq (n-1) \int_{\Sigma} |\nabla u|^{2} |\nabla \varphi_{t}|^{2} \leq (n-1)R^{-2} \int_{\Sigma} |\nabla u|^{2} < \epsilon.
\]
Thus, we find that 
\[
\int_{B_{a}^{\Sigma}(p)} |\nabla |\nabla u||^{2} < \epsilon. 
\]
Since $a$ and $\epsilon$ were arbitrary, we find that $\nabla |\nabla u| = 0$ along $\Sigma$, so $|\nabla u|$ is constant. Since $\Sigma$ has infinite volume and $|\nabla u| \in L^{2}(\Sigma)$, we find that $|\nabla u| = 0$. Thus, $u$ is constant, a contradiction. This completes the proof. 
\end{proof}

We now explain how the preceding argument generalizes to the case where the minimal hypersurface is allowed to have a small singular set. 
\begin{theo}\label{theo:CCZ-non-smooth}
For $n \geq 3$, suppose that $V$ is a stationary integral $n$-varifold in $\RR^{n+1}$ with ${\rm spt} \, \|V\|$ connected, ${\rm dim}_{\mathcal H} \, ({\rm sing} \, V) \leq n-7$ and with ${\rm sing} \, V \subset B_{1}(0)$. Assume that the regular part ${\rm spt} \, \|V\| \setminus {\rm sing} \, V$ is weakly stable. 
Then ${\rm spt} \, \|V\|$ has exactly one end at infinity. 
\end{theo}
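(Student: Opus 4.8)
The plan is to mimic the Cheng--Cheung--Zhou argument from Theorem~\ref{theo:CCZ-smooth}, replacing the single hypersurface by the regular part $\Sigma := {\rm spt}\,\|V\| \setminus {\rm sing}\, V$ and being careful with the (small) singular set when integrating by parts. First I would observe that $V$ is a stationary integral varifold whose singular set has Hausdorff dimension $\leq n-7$, so in particular it has locally finite mass and $\mathcal H^{n-2}({\rm sing}\, V) = 0$; this codimension bound is exactly what is needed to run cut-off arguments near ${\rm sing}\, V$ without boundary terms. Next, assuming ${\rm spt}\,\|V\|$ has at least two ends at infinity, I would produce a non-constant bounded harmonic function $u$ on $\Sigma$ with $\int_\Sigma |\nabla u|^2 < \infty$. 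The one subtlety here is that ${\rm spt}\,\|V\|$ has ends in the metric-measure sense but $\Sigma$ is $\Sigma = {\rm spt}\,\|V\| \setminus {\rm sing}\,V$; since ${\rm sing}\,V \subset B_1(0)$ is compact and of codimension $\geq 7$, removing it does not change the number of ends at infinity, and one can solve the relevant Dirichlet problems on $\Sigma$ (the singular set is removable for bounded harmonic functions with finite energy, again by the codimension bound and capacity considerations), so \cite[Lemma 2]{CaoShenZhu} applies with only cosmetic changes.

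With such a $u$ in hand, the computation is identical to the one in Theorem~\ref{theo:CCZ-smooth}: plugging $\varphi|\nabla u|$ into the weak stability inequality for the CMC-with-$H=0$ immersion $\Sigma$ (valid for $\varphi \in C^1_c(\Sigma)$ with $\int_\Sigma \varphi|\nabla u| = 0$), using the Bochner formula, the Gauss equation $\Ric_\Sigma(\nabla u, \nabla u) \geq -|A|^2|\nabla u|^2$, and the improved Kato inequality $|D^2 u|^2 - |\nabla|\nabla u||^2 \geq \tfrac{1}{n-1}|\nabla|\nabla u||^2$, yields
\[
\int_\Sigma |\nabla|\nabla u||^2 \varphi^2 \leq (n-1)\int_\Sigma |\nabla u|^2 |\nabla \varphi|^2.
\]
The logarithmic cut-off family $\varphi_t$ is built from the intrinsic distance on $\Sigma$ (equivalently on ${\rm spt}\,\|V\|$, which is a length space), and the argument that $\int_\Sigma |\nabla u| = \infty$ proceeds exactly as before via the coarea formula on geodesic spheres in $\Sigma$. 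Letting $a, \epsilon$ be arbitrary forces $\nabla|\nabla u| \equiv 0$, so $|\nabla u|$ is constant; since $\|V\|$ has infinite total mass (two ends at infinity and Euclidean area growth bounds below) and $|\nabla u| \in L^2$, we get $|\nabla u| \equiv 0$, contradicting that $u$ is non-constant.

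I would organize the write-up so that only the points where the singular set enters are spelled out, referring back to the proof of Theorem~\ref{theo:CCZ-smooth} for the rest. The main obstacle — the step needing the most care — is justifying the integration by parts (both the Bochner/divergence identities and the stability inequality itself) on $\Sigma$ with the extra cut-offs that kill a neighborhood of ${\rm sing}\,V$, and checking that these cut-offs contribute negligibly because $\mathcal H^{n-2}({\rm sing}\, V) = 0$ (so that energy of a logarithmic cut-off around ${\rm sing}\,V$ tends to $0$). One should also confirm that the test functions $\varphi_t|\nabla u|$, which a priori are only Lipschitz, can be used: $|\nabla u|$ is smooth where $\nabla u \neq 0$ and the set $\{\nabla u = 0\}$ causes no trouble since $|\nabla|\nabla u||$ is interpreted via Kato as usual, and the cut-off near ${\rm sing}\,V$ is handled by a standard density argument. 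Modulo these technical but routine points, the proof of Theorem~\ref{theo:CCZ-smooth} transfers verbatim.
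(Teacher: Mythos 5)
Your overall strategy is the paper's strategy, but there is a genuine gap at exactly the step you dismiss as ``routine'': justifying the use of $\varphi_t|\nabla u|$ multiplied by a cut-off $\zeta_\delta$ that vanishes near ${\rm sing}\,V$. The error term you must control is $\int_M |\nabla \zeta_\delta|^2\,|\nabla u|^2$, \emph{not} $\int_M |\nabla\zeta_\delta|^2$. The condition $\mathcal H^{n-2}({\rm sing}\,V)=0$ (zero $2$-capacity) only makes the latter small; it would suffice for the former only if $|\nabla u|$ were a priori bounded near ${\rm sing}\,V$, which is not known (the paper explicitly flags this). So ``the energy of a logarithmic cut-off around ${\rm sing}\,V$ tends to $0$'' does not, as stated, close the argument. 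The missing ingredient is a local energy growth estimate for $u$ at singular points: since $\Delta u=0$ on $M$ and $|u|\leq 1$, integration by parts plus Cauchy--Schwarz gives $\int_M |\nabla u|^2\phi^2\leq 4\int_M|\nabla\phi|^2$ for $\phi\in C^1_c(M)$, which extends to $\phi\in C^1_c(\RR^{n+1})$ by the capacity argument you have in mind, and then the monotonicity formula yields $\int_{B_r(p)\cap M}|\nabla u|^2\leq Cr^{n-2}$. With this, covering ${\rm sing}\,V$ by balls $B_{r_i}(x_i)$ with $\sum_i r_i^{n-4}\leq\delta$ (possible because ${\rm dim}_{\mathcal H}({\rm sing}\,V)\leq n-7$, so one is really using codimension $4$, not $2$) gives $\int_M|\nabla\zeta_\delta|^2|\nabla u|^2\leq C\sum_i r_i^{-2}\cdot r_i^{n-2}\leq C\delta$. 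This is the quantitative heart of the extension and cannot be waved through on capacity grounds alone.

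Two smaller points you should also address. First, inserting $\zeta_\delta$ destroys the zero-average condition, so the parameter $t$ in $\varphi_t$ must be re-chosen for each $\delta$ to keep $\int \zeta_\delta\varphi_{t,\delta}|\nabla u|=0$, and one then passes $\delta\to0$ with uniform bounds on $\varphi_{t,\delta}$ and $\nabla\varphi_{t,\delta}$. Second, the final step ``$\nabla u\equiv0$ hence $u$ is constant, contradiction'' requires that $M={\rm spt}\,\|V\|\setminus{\rm sing}\,V$ be \emph{connected}, which is not automatic from connectedness of ${\rm spt}\,\|V\|$; the paper invokes Ilmanen's result that a singular set of codimension $\geq 7$ does not disconnect the support of a stationary varifold. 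Your construction of $u$ itself (modifying the Cao--Shen--Zhu exhaustion by deleting shrinking tubular neighbourhoods of ${\rm sing}\,V$, and replacing completeness by the monotonicity formula to get infinite volume of each end) is essentially what the paper does and is fine.
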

\begin{proof}
Let $M = {\rm spt} \, \|V\| \setminus {\rm sing} \, V$ and suppose that $M$ has two (or more) ends at infinity. The proof proceeds as in the smooth case (Theorem \ref{theo:CCZ-smooth} above) with a few additional arguments. 
First we note that by \cite[Theorem A (ii)]{Ilmanen}, $\sing V$ does not disconnect $\supp \|V\|$, so $M$ is connected. This is needed at the end of the proof in order to have that the identical vanishing of $\nabla u$ implies the global constancy of $u$ (and not just the local constancy), which provides the desired contradiction.

The next argument concerns the existence of a non-constant bounded harmonic function $u$ with finite energy on $M$.  This is proved in the case $\sing V=\emptyset$ in \cite[Lemma 2]{CaoShenZhu} (and is used,  as noted, in Theorem \ref{theo:CCZ-smooth} above). The completeness assumption in \cite[Lemma 2]{CaoShenZhu} is not necessarily fulfilled by $M$, so it does not seem possible to simply invoke that result. We note, however, that completeness  is used in \cite{CaoShenZhu} only to infer that each end has infinite volume (\cite[Lemma 1]{CaoShenZhu}); this fact on the other hand follows directly from the monotonicity formula. Then we can follow verbatim the arguments in \cite[Lemma 2]{CaoShenZhu}, only with the following additional care. When we exhaust the hypersurface with domains $D_i$ we should remove, from the $D_i$ constructed in \cite{CaoShenZhu}, the closure of a smooth tubular neighbourhood of $\sing  V$ (whose size shrinks as $i\to \infty$). This will produce further boundary components, in addition to those in \cite{CaoShenZhu}, on which we will set boundary value $0$ when solving the Dirichlet problem \cite[(2)]{CaoShenZhu}. 

One more additional argument is needed in view of the fact that the test function $\varphi_t |\nabla u|$ constructed in Theorem \ref{theo:CCZ-smooth} might fail, a priori, to be an admissible function for the stability inequality. Indeed, $\varphi_t |\nabla u|$ is not compactly supported on $M$ and we do not have sufficient control of $|\nabla u|$ near $\sing  V.$ (If e.g.\ $|\nabla u|$ were bounded near ${\rm sing} \, V$, a straightforward capacity argument would suffice.) In order to overcome this difficulty,  we first observe an energy growth estimate for $u$ in balls centred on $\sing  V$ (inequality (\ref{eq:finerestimateonnablau}) below) which is obtained as follows: Since $\Delta u = 0$ on $M$, we see by integrating by parts and using the Cauchy--Schwarz inequality that for any $\phi \in C^1_c(M)$,
\[
\int_M |\nabla  u|^2 \phi^2 = -\int_M 2\phi u \nabla  u \nabla  \phi \leq 2 \left(\int_M \phi^2 |\nabla  u|^2\right)^{1/2} \left(\int_M   u^2  |\nabla  \phi|^2\right)^{1/2},
\]
from which we immediately get (using also the bounds $-1\leq u \leq 1$)
\[
\int_M |\nabla  u|^2 \phi^2 \leq 4\int_M   u^2  |\nabla  \phi|^2 \leq 4\int_M   |\nabla  \phi|^2 .
\]
Consequently, a standard capacity argument that only needs that the $2$-capacity of $\sing V$ is $0$ (true in view of $\mathcal{H}^{n-2}(\sing V)=0$) gives that the inequality 
\[
\int_M |\nabla  u|^2 \phi^2  \leq 4\int_M   |\nabla  \phi|^2 
\]
holds for all $\phi \in C^1_c(\RR^{n+1})$. In particular, choosing any $p\in \supp V$ and $\phi \in C^1_c(B_{2r}(p))$ to be a standard bump function that is identically equal to $1$ on $B_{r}(p)$ and identically equal to $0$ on the complement of $B_{2r}(p)$ with $|\nabla^{\RR^{n+1}}\phi|\leq \frac{2}{r}$, the preceding inequality and the monotonicity formula give
\begin{equation}
 \label{eq:finerestimateonnablau}
 \int_{B_{r}(p) \cap M} |\nabla  u|^2  \leq C r^{n-2}, 
 \end{equation}
where $C$ is independent of $r$.

With  this we proceed as follows: Let $\delta >0$ and let $\{B_{r_i}(x_i)\}_{i=1}^N$ be a cover of ${\rm sing} \, V$ (compact since ${\rm sing}\, V \subset B_{1}(0)$) with $\sum_{i=1}^{N}r_i^{n-4}\leq \delta$ (possible since ${\rm dim}_{\mathcal H} \, ({\rm sing} \, V) \leq n-7$). Defining a cutoff function $\zeta_{\delta}=\min_{i\in\{1, ..., N\}} \zeta_i$, where $\zeta_i \in C^{1}({\mathbb R}^{n+1})$ with $0 \leq \zeta_{i} \leq 1$, $\zeta_{i} = 0$ in $B_{r_{i}}(x_{i})$, $\zeta = 1$ in ${\mathbb R}^{n+1} \setminus B_{2r_{i}}(x_{i})$ and $|D\zeta_{i}| \leq 2r_{i}^{-1}$, in view of (\ref{eq:finerestimateonnablau}) we see that  
\[
\int_{M} |\nabla \zeta_\delta|^2 |\nabla u|^2\leq 2\sum_{i=1}^N \int_{B_{2r_i}} |\nabla \zeta_i|^2 |\nabla u|^2 \leq 8\sum_{i=1}^N r_i^{-2} \int_{B_{2r_i}} |\nabla u|^2 \leq 8C \sum_{i=1}^Nr_i^{n-4} \leq 8C \delta
\]
whence 
\begin{equation}
 \label{eq:controlonzetadelta}
\int_{M} |\nabla \zeta_\delta|^2 |\nabla u|^2\to 0
\end{equation}
as $\delta \to 0.$ 
We can now adapt the arguments in Theorem~\ref{theo:CCZ-smooth}: let $\epsilon>0$ be arbitrary and choose $R>0$ so that $(n-1)R^{-2}\int_{\Sigma} |\nabla u|^{2} < \epsilon $. For every $\delta>0$ choose a compactly supported function $\varphi_{t,\delta}$ that is constructed in the manner $\varphi_{t}$ is constructed in Theorem \ref{theo:CCZ-smooth} so as to ensure $\int\zeta_\delta \varphi_{t,\delta} |\nabla u|=0$ (one can work with the function $\varphi_t$ defined above and set $a\geq 1$ so that $\varphi_t$ is $1$ on the singular set and $\zeta_\delta |\nabla u|=|\nabla u|$ on $B_{a+2R+b}^{\Sigma}(p) \setminus B_{a+2R}^{\Sigma}(p)$; the $t$ for which the zero-average condition is met will depend however on $\delta$, hence the dependence of $\varphi_{t,\delta}$ on $\delta$). As $\delta \to 0$, since $|\varphi_{t,\delta}|\leq 1$ and $|\nabla \varphi_{t,\delta}|\leq \frac{1}{R}$ are uniformly bounded, and moreover $b(\epsilon, a)$ can be chosen independently of $\delta$, we get  for a sequence $\delta_{j} \to 0^{+}$ that $\varphi_{t,\delta_{j}} \to \tilde{\varphi}$ for a compactly supported Lipschitz function satisfying $\int \tilde{\varphi}  |\nabla u|=0$ and $\tilde{\varphi}$ identically $1$ on $B_a^\Sigma(p)$.

Plugging the (admissible) test function $\zeta_\delta \varphi_{t,\delta} |\nabla u|$ in the stability inequality we get, arguing as in the proof of Theorem \ref{theo:CCZ-smooth} by means of Bochner's formula and Gauss equations,
\begin{align*}
& \int_{M} |\nabla|\nabla u||^{2} (\zeta_\delta \varphi_{t,\delta})^{2}\\
 & \leq (n-1) \int_{M} |\nabla u|^{2} |\nabla (\zeta_\delta \varphi_{t,\delta})|^{2}\\
& =(n-1)   \int_{M} |\nabla \zeta_\delta|^2 \varphi_{t,\delta}^2 |\nabla u|^2 + 2  \varphi_{t,\delta} \zeta_\delta |\nabla u|^2 \nabla \zeta_\delta\cdot \nabla\varphi_{t,\delta}+\zeta_\delta^2 |\nabla \varphi_{t,\delta}|^2 |\nabla u|^2 ;
\end{align*}
using the Cauchy--Schwarz inequality for the middle term on the right-hand side, setting $\delta = \delta_{j}$ and letting $j \to \infty$ we obtain (recalling (\ref{eq:controlonzetadelta})) that the first and second term on the right-hand side vanish in the limit; moreover, with the choices of $\epsilon$ and $R$ recalled above, using that $\zeta_{\delta_{j}} \uparrow 1$, $\varphi_{t,\delta_{j}}\to \tilde{\varphi}$ as $\delta_{j} \to 0$, and that $|\nabla \varphi_{t,\delta_{j}}|\leq \frac{1}{R}$, we get 
\[
\int_{M} |\nabla|\nabla u||^{2}  \tilde{\varphi}^{2} \leq \epsilon,
\]
which allows us to conclude, in view of the arbitrariness of $\epsilon$, that $|\nabla u|=0$ on $\Sigma$ and obtain the desired contradiction with the non-constancy of $u$.
\end{proof}

\section{Results from \cite{BW:stableCMC}}
\label{appendixBW}

We recall here the regularity/compactness results from \cite{BW:stableCMC} that are used in the proof of Theorem \ref{theo:main-weakly-st-sheeting}. The first one is a combination of \cite[Theorem 2.1]{BW:stableCMC} and \cite[Theorem 2.3]{BW:stableCMC}.

\begin{theo}[\textbf{regularity/compactness for weakly stable CMC hypersurfaces}]
\label{theo:BWregcomp}
Let $n \geq 2$, $R$, $K_0$, $H_0 \in (0,\infty)$ be fixed. Denote by $\mathcal{S}_{H_0, K_0}(B_R^{n+1}(0))$ the class of all hypersurfaces $M$ in $B_R^{n+1}(0)$ such that

\begin{itemize}
 \item $M$ is an immersed, smooth, weakly stable, CMC hypersurface (not necessarily complete) in $B_R^{n+1}(0)$, with integer multiplicity (constant on every connected component of the immersion);
 
 \item $\mathcal{H}^{n-7+\alpha}\left(\overline{M}\setminus M\right)=0$ for all $\alpha>0$ (i.e.~$M$ is allowed to have a singular set of co-dimension at least $7$);

 \item $M$ has no transverse points; equivalently (by the strong maximum principle), at every $p \in M$ where $M$ is not embedded, there exists $\rho>0$ such that $M \cap B^{n+1}_\rho(p)$ is the union of exactly two embedded complete smooth CMC hypersurfaces in $B^{n+1}_\rho(p)$ that intersect only tangentially;
 
 \item the modulus $H$ of the mean curvature of $M$ is $\leq H_0$; 
 
 \item $\mathcal{H}^n(M) \leq K_0$.
\end{itemize}

Then $\mathcal{S}_{H_0, K_0}(B_R^{n+1}(0))$ is a compact family in the varifold topology. Moreover, if $V_n \in \mathcal{S}_{H_0, K_0}$ and $V_n \rightharpoonup V$ the (constant) mean curvature of $V$ is given by $\lim_{n\to \infty} H_n$, where $H_n$ is the (constant) mean curvature of $V_n$.
\end{theo}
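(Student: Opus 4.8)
The plan is to obtain this as a consequence of Allard's compactness theorem together with a regularity theory for the class $\mathcal{S}_{H_0,K_0}$ modeled on the Schoen--Simon sheeting theorem \cite{SchoenSimon} and Wickramasekera's regularity and compactness theory for strongly stable minimal hypersurfaces (\cite{W:annals2014}), adapted to incorporate a constant mean curvature term, the \emph{weak} (rather than strong) form of stability, and the presence of two-fold touching singularities. First I would take a sequence $V_n \in \mathcal{S}_{H_0,K_0}(B_R^{n+1}(0))$ underlain by CMC immersions $M_n$ with $|H_n|\le H_0$. Since the first variation of $V_n$ is $\delta V_n(X) = -\int H_n\,\nu_n\cdot X\,d\|V_n\|$, it is bounded in mass by $H_0 K_0$; hence by Allard's theorem a subsequence converges in the varifold topology to an integral $n$-varifold $V$ with locally bounded first variation whose generalized mean curvature has modulus $\le H_0$ almost everywhere. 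Everything then reduces to proving the \emph{regularity} of $V$: that $\supp\|V\|$ is, away from a closed set of Hausdorff dimension $\le n-7$, a smooth immersed CMC hypersurface with no transverse points and mean curvature of modulus $\lim_n H_n$, and that its regular part is weakly stable.

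The heart of the argument is a sheeting theorem for the class: if $V$ is, as a varifold in some ball $B_\rho(p)$, close to a hyperplane $P$ taken with integer multiplicity $q$, then $\supp\|V\|\cap B_{\rho/2}(p)$ is the union of $q$ smooth CMC graphs over $P$ with small gradient, pairs of which are allowed to coincide tangentially. I would prove this by the blow-up method: rescale a sequence of members of the class converging to $qP$, form the normalized blow-up of the sheet height functions, and show it converges to a $q$-valued harmonic function---the CMC nonlinearity, being lower order, drops out under the normalization, exactly as a mean-curvature term does in the classical Allard/Schoen--Simon blow-up. The genuinely new ingredient is the use of the weak stability inequality in this scheme: two test functions with disjoint supports cannot both violate strong stability without their mean-zero combination violating weak stability (the mechanism already used in the proof of Proposition~\ref{prop:bernstein-low-dim}); hence, after discarding at most one small ball, the $M_n$ are strongly stable on $B_\rho(p)$ for large $n$, which restores the $L^2$ estimates from stability and the excess-decay iteration of Schoen--Simon and Wickramasekera on all but one controlled region. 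The touching singularities are accommodated by working with the immersion rather than its image and invoking the strong maximum principle, which forces non-embedded points to occur in tangentially meeting pairs.

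With the sheeting theorem in hand, the dimension of $\sing V$ is estimated by Federer's dimension-reduction argument: tangent cones to $V$ are, by the monotonicity formula (the mean curvature rescaling away), stationary minimal cones whose regular parts inherit weak stability, and there are no nonflat stable minimal cones of dimension $\le 6$ (Simons, cf.\ Lemma~\ref{lemm:cone-small-curvature}), whence $\dim_{\mathcal H}(\sing V)\le n-7$; smoothness of the CMC immersion away from $\sing V$ follows from the sheeting theorem together with Allard-type regularity at density-one points. It then follows that $V\in\mathcal{S}_{H_0,K_0}$: it has no transverse points (once the local graphical structure is known, transversality is an open condition incompatible with being a limit of non-transverse immersions), and its regular part is weakly stable because on compact subsets of the regular part the convergence $M_n\to V$ is smooth, so a mean-zero test function on $\mathrm{reg}\,V$ may be approximated by admissible test functions on $M_n$ after an arbitrarily small correction restoring the zero-average constraint (harmless since the relevant $L^2$ quantities converge). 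Finally, on $\mathrm{reg}\,V$ the generalized mean curvature vector is the limit of $-H_n\nu_n$, so its modulus equals $\lim_n H_n$, constant on each connected component.

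I expect the sheeting/blow-up step to be the main obstacle: making the ``at most one bad region'' feature of weak stability mesh correctly with the excess-decay iteration---so that one genuinely recovers the strongly stable estimates everywhere they are needed---while simultaneously tracking the CMC nonlinearity and the two-fold touching singularities is the substantial content of \cite{BW:stableCMC}. The varifold compactness, the singular-set estimate, and the convergence of the mean curvature value are, by comparison, fairly formal consequences of that analysis.
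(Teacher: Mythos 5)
This theorem is not proved in the paper at all: it is recalled verbatim in Appendix~\ref{appendixBW} as ``a combination of \cite[Theorem 2.1]{BW:stableCMC} and \cite[Theorem 2.3]{BW:stableCMC}'', so the paper's own ``proof'' is a citation. Your outline is therefore best judged as a reconstruction of the strategy of that reference, and at that level it is architecturally faithful: Allard compactness for the varifold limit, a Schoen--Simon/Wickramasekera-style sheeting theorem proved by blow-up, the disjoint-supports trick converting weak stability into strong stability away from at most one small region (which is exactly what surfaces in the second alternative of Theorem~\ref{theo:sheeting-away-from-pt}), dimension reduction for the singular set, and passage of weak stability and of the constant $H$ to the limit. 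But you correctly concede that the sheeting/excess-decay step under weak stability with touching singularities ``is the substantial content of \cite{BW:stableCMC}'', so the proposal does not constitute an independent proof; all the load-bearing analysis remains deferred to the very result being quoted.

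Two points of imprecision worth flagging. First, in the dimension-reduction step you invoke ``no nonflat stable minimal cones of dimension $\leq 6$ (Simons, cf.\ Lemma~\ref{lemm:cone-small-curvature})'': Lemma~\ref{lemm:cone-small-curvature} is a different statement (rigidity of cones with $|A|(x)|x|<\sqrt{n-1}$, via \cite[Corollary 5.3.2]{Simons}), whereas what dimension reduction needs is the nonexistence of nonplanar \emph{strongly} stable minimal hypercones in $\RR^{m+1}$ for $m\leq 6$; moreover one must first upgrade the weak stability of the blow-up sequence to strong stability of the cone away from its vertex (again via the disjoint-supports mechanism), since weak stability alone is not scale-invariant in the way your sentence suggests. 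Second, the claim that ``after discarding at most one small ball, the $M_n$ are strongly stable on $B_\rho(p)$'' must be made uniform across all the balls and scales appearing in the excess-decay iteration, and reconciling the single exceptional region with the iteration is precisely where the difficulty lies; acknowledging this is not the same as resolving it.
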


The next result is a synthesis of \cite[Theorem 2.1]{BW:stableCMC}, \cite[Theorem 3.1]{BW:stableCMC}, \cite[Theorem 3.3]{BW:stableCMC} and \cite[Lemma 8.1]{BW:stableCMC}.

\begin{theo}[\textbf{sheeting away from a point for weakly stable CMC hypersurfaces}]
\label{theo:sheeting-away-from-pt}
Let $V_j \to V$, where $V_j \in \mathcal{S}_{H_0, K_0}(B_R^{n+1}(0))$ (with the notations from the previous statement) and $V$ is a sum of parallel hyperplanes, each with a constant integer multiplicity.
Then, up to a rotation of coordinates
 \begin{itemize}
  \item either, for every $k$ large enough, we have
  
  $$\supp{V_k} \text{ restricted to } \left(B_{\frac{R}{2}}^{n}(0) \times {\RR} \right) = \cup_{j=1}^{q} {\rm graph} \, u_{j},$$ where $u_{j} \in C^{2, \alpha} \, (B_{\frac{R}{2}}^{n}(0); {\mathbb R})$, $u_j$ are separately smooth CMC graphs (possibly with tangential intersections) with small gradients and $u_{1} \leq u_{2} \leq \ldots \leq u_{q}$,
  
  \item or there exists a point $y \in \supp{\|V\|} \cap  \left(\overline{B}_{R/2}^{n}(0) \times {\RR} \right)$ and a subsequence $V_{j'}$ such that, for any $r>0$, the following holds: for $j'$ large enough (depending on $r$) $V_{j'}$ is strongly stable in $\left(B_{R}^{n}(0) \times {\RR} \right) \setminus B^{n+1}_r(y)$ and moreover $V_{j'}$ converges smoothly (with sheeting and possibly with multiplicity) to $V$ away from $y$, in the following sense. With the notation $V=\sum q_i |W_i|$ (where each $W_i$ is one of the parallel hyperplanes of $\text{supp}\,V$), $\sum_i q_i =q$ and $y\in W_1$, if $r<\text{dist}(W_1, W_i)$ for $i\neq 1$, then for $j'$ large enough (depending on $r$), the following decomposition holds:

$$\supp{V_{j'}} \text{ restricted to } \left(B_{\frac{9R}{10}}^{n}(0) \times {\RR} \right) \setminus B^{n+1}_r(y) = \cup_{j=1}^{q_1} {\rm graph} \, u_{j} \bigcup \cup_{j=q_1+1}^{q} {\rm graph} \, u_{j}$$
where $u_{j} \in C^{2, \alpha} \, (B_{\frac{9R}{10}}^{n}(0)\setminus B^n_r(y); {\mathbb R})$ for $j=1, ..., q_1$, $u_{j} \in C^{2, \alpha} \, (B_{\frac{9R}{10}}^{n}(0); {\mathbb R})$ for $j=q_1+1, ..., q$ and the $u_j$ are separately smooth CMC graphs (possibly with tangential intersections) with small gradients and $u_{1} \leq u_{2} \leq \ldots \leq u_{q}$.
 \end{itemize}
 
\end{theo}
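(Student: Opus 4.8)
The plan is to assemble the four cited results from \cite{BW:stableCMC} into the stated dichotomy. The inputs are: the regularity/compactness theorem \cite[Theorem 2.1]{BW:stableCMC} (recalled as Theorem \ref{theo:BWregcomp}), which guarantees that $V_j\to V$ smoothly and graphically away from a closed ``non-sheeting'' set; the sheeting theorem \cite[Theorem 3.1]{BW:stableCMC} near a hyperplane of multiplicity, which both produces the ordered smooth CMC graphs with small gradient in the good region and, in its failure mode, isolates a point at which strong instability develops; the structural result \cite[Theorem 3.3]{BW:stableCMC}, governing the behaviour near that point and across several parallel hyperplanes; and the localization lemma \cite[Lemma 8.1]{BW:stableCMC}, which upgrades weak stability to strong stability on the complement of a neighbourhood of that point.

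First I would apply Theorem \ref{theo:BWregcomp}: since $V=\sum_i q_i|W_i|$ is a sum of parallel hyperplanes, there is a relatively closed set $Z\subset\supp\|V\|$ (the ``non-sheeting points'') such that near each $z\in\supp\|V\|\setminus Z$ there is $\rho>0$ with $\supp V_j\cap B_\rho(z)$, for all large $j$, a union of ordered $C^{2,\alpha}$ CMC graphs over the plane $W_{i(z)}\ni z$, with small gradient, converging in $C^{2,\alpha}$ to $q_{i(z)}$ copies of $W_{i(z)}$; the number of sheets is locally constant, hence equal to $q_i$ along $W_i$. If $Z\cap(\overline B^n_{R/2}(0)\times\RR)=\emptyset$, then (by compactness of $\supp\|V\|\cap(\overline B^n_{R/2}(0)\times\RR)$ and connectedness of $B^n_{R/2}(0)$) these local decompositions patch into a global decomposition of $\supp V_k$ over $B^n_{R/2}(0)$ into $q=\sum_i q_i$ ordered smooth CMC graphs with small gradient: this is the first alternative, and we are done. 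So assume $Z$ meets $\overline B^n_{R/2}(0)\times\RR$.

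In this case I would invoke \cite[Theorem 3.1]{BW:stableCMC} and \cite[Lemma 8.1]{BW:stableCMC}. The key point, which comes from these results together with weak stability, is that $Z\cap(B^n_{9R/10}(0)\times\RR)$ is, after passing to a subsequence $V_{j'}$, a single point $y$ (necessarily in $\overline B^n_{R/2}(0)\times\RR$). The mechanism is that a non-sheeting point forces $V_{j'}$ to fail strong stability in every neighbourhood of it --- since, by the sheeting theorem for \emph{strongly} stable CMC hypersurfaces, a strongly stable hypersurface close to a multiplicity hyperplane must sheet --- so two such points would yield, for a single index $j'$, functions $\varphi_1,\varphi_2$ supported in disjoint balls with $\int|A_{\Sigma_{j'}}|^2\varphi_\ell^2>\int|\nabla\varphi_\ell|^2$; weak stability forces $\int\varphi_\ell\neq 0$, and then $\varphi_1+t\varphi_2$ with $\int(\varphi_1+t\varphi_2)=0$ violates weak stability (the cut-and-paste already used in the proof of Proposition \ref{prop:bernstein-low-dim}), i.e.\ contradicts $\Index(\Sigma_{j'})\le 1$. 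With $y\in W_1$ after relabelling, the same disjoint-support argument (using the strong instability at $y$ on one side) shows $V_{j'}$ is strongly stable in $(B^n_R(0)\times\RR)\setminus B_r^{n+1}(y)$ for all large $j'$. Finally \cite[Theorem 3.3]{BW:stableCMC} supplies the normal form away from $y$: $\supp V_{j'}$ over $B^n_{9R/10}(0)\setminus B^n_r(y)$ is $q_1$ ordered $C^{2,\alpha}$ CMC graphs over $W_1$ on the punctured domain together with $q-q_1$ ordered $C^{2,\alpha}$ CMC graphs over the remaining $W_i$ on the full domain, all with small gradient --- the second alternative.

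The genuine obstacle --- and the reason we cite \cite[Lemma 8.1]{BW:stableCMC} and \cite[Theorem 3.3]{BW:stableCMC} rather than reprove them here --- is the localization of the non-sheeting behaviour to a \emph{single} point, together with the simultaneous control of subsequences: ``non-sheeting at $z$'' is a priori only a statement about some subsequence, while the disjoint-support argument that turns $\Index\le 1$ into a genuine obstruction requires the strong instability near two candidate bad points (and in the complement of $B_r(y)$) to be realized for a \emph{common} index $j'$; arranging this, and verifying that failure of sheeting at a point really does force strong instability there in the CMC setting with a codimension-$7$ singular set and no transverse points, is the technical heart carried out in \cite{BW:stableCMC}. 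Granting these inputs, the work remaining here is organizational: identifying $Z$, separating the two alternatives, and rewriting the \cite{BW:stableCMC} conclusions in the stated form.
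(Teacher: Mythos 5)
Your proposal is correct and matches the paper's treatment: the paper offers no proof of Theorem \ref{theo:sheeting-away-from-pt}, presenting it only as a synthesis of \cite[Theorems 2.1, 3.1, 3.3 and Lemma 8.1]{BW:stableCMC}, and your assembly of those same four inputs --- with the single-bad-point localization via the disjoint-support violation of weak stability --- is exactly the intended mechanism (the same cut-and-paste used at the start of the proof of Proposition \ref{prop:bernstein-low-dim}). Nothing further is required beyond the citations you give.
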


\bibliography{bib} 
\bibliographystyle{amsalpha}

\end{document}